\documentclass{article}
\usepackage[utf8]{inputenc}
\usepackage[T1]{fontenc}
\usepackage{amssymb,amsmath,amsthm,amsfonts, epsfig}           
\usepackage{mathtools} 
\usepackage{hyperref}
\usepackage{eucal}
\usepackage[a4paper, bottom=2.5cm]{geometry}
\usepackage[svgnames]{xcolor}
\usepackage{mathptmx} 
\usepackage{graphicx}
\usepackage{fancyhdr}
\usepackage{enumerate}

\usepackage[numbers]{natbib} 

\usepackage{framed}

\usepackage[normalem]{ulem} 

\usepackage{authblk}

\title{Structure preserving discretization of time-reparametrized Hamiltonian systems with application to nonholonomic mechanics}

\author[1]{Luis C. García-Naranjo}
\affil[1]{Departamento de Matemáticas y Mecánica,
IIMAS-UNAM, Mexico City, MEXICO. \newline \texttt{luis@mym.iimas.unam.mx}}
\author[2]{Mats Vermeeren}
\affil[2]{School of Mathematics, University of Leeds, Leeds, UK. \newline \texttt{m.vermeeren@leeds.ac.uk}}

\date{}

\numberwithin{equation}{section}
\numberwithin{table}{section}
\numberwithin{figure}{section}

\hypersetup{colorlinks=true,linkcolor=violet,citecolor=purple}

\newtheorem{theorem}{Theorem}[section]

\newtheorem{proposition}[theorem]{Proposition}
\newtheorem{corollary}[theorem]{Corollary}

{\theoremstyle{definition}
\newtheorem{definition}[theorem]{Definition}

\newtheorem*{remarks*}{Remarks}

\newtheorem{example}[theorem]{Example}

}

\providecommand{\customgenericname}{}
\newcommand{\newcustomtheorem}[2]{%
  \newenvironment{#1}[1]
  {%
   \renewcommand\customgenericname{#2}%
   \renewcommand\theinnercustomgeneric{##1}%
   \innercustomgeneric
  }
  {\endinnercustomgeneric}
}

\newcustomtheorem{customhyp}{}

\newcommand{\defn}[1]{{\bfseries\itshape{#1}}}

\pagestyle{fancy} 
\def\headcolour{\color{Grey}}
\setlength{\headheight}{14pt}
 \lhead[\headcolour\textsc{ }]{\headcolour\textsc{ }}
 \rhead{\headcolour\thepage}
 \cfoot{}
 \rfoot{\it\color{Grey} \today}


\usepackage{soul}

\def\restr#1{\,\vrule height1.2ex width.4pt
  depth0.8ex\lower0.4ex\hbox{\scriptsize $\,#1$}}

\newcommand{\cN}{\mathcal{N}}

\newcommand{\R}{\mathbb{R}}

\newcommand{\g}{\mathfrak{g}}

\newcommand{\cO}{\mathcal{O}}

\renewcommand{\d}{\mathsf{d}}

\begin{document}

\maketitle

\begin{abstract}
    We propose a discretization of vector fields that are
    Hamiltonian up to multiplication by a positive function
    on the phase space that may be interpreted as a time 
    reparametrization. We prove that our  method is structure preserving in the sense that the discrete flow is interpolated to arbitrary order by the flow of a continuous system possessing 
    the same structure. In particular, our discretization 
    preserves a smooth measure on the phase space to arbitrary order. We present applications to a remarkable class of nonholonomic mechanical systems that allow
    Hamiltonization. 
    To our best knowledge, these results provide the  first
    occurrence in the literature of a measure
    preserving discretization of  measure preserving nonholonomic systems.
    
    \medskip\noindent
    \textbf{Keywords:} Geometric integration; nonholonomic mechanics; measure preservation; conformally \linebreak Hamiltonian systems.
    
    \smallskip\noindent
    \textbf{MSC2020:} 37M15, 37J60, 37C40, 70G45.
\end{abstract}

\section{Introduction}

The mission of geometric integrators is to develop numerical schemes that preserve the 
geometric structure of differential equations. 
 Geometric integrators often show (near) conservation of the conserved quantities of the continuous system, which is a major benefit for the numerical performance over long timescales. A central role in geometric integration is taken by symplectic or variational integrators. For a broad overview of these concepts and their history we refer to \cite{MarsdenWest, leimkuhler2004simulating, hairer2006geometric}.

This paper deals with the geometric
discretization of differential equations on $\R^{2n}$
of the form
\begin{equation}
\label{eq:itro-conformal}
\frac{ \d q}{\d t}= \mathcal{N}(q,p) \frac{\partial H}{\partial p} (q,p), \qquad \frac{ \d p}{\d t}= -\mathcal{N}(q,p) \frac{\partial H}{\partial q} (q,p),
\end{equation}
where $(q,p)\in \R^n\times \R^n$ and 
$H,\mathcal{N} \in C^\infty(\R^{2n}) $ with $\mathcal{N}>0$.
Namely, we deal with vector fields that are 
Hamiltonian modulo
the multiplication by 
a positive function, or, equivalently, a time reparametrization. In agreement 
with the terminology used in \cite{Marle2012, borisov2008conservation}, we  refer to these 
systems 
as  \defn{conformally Hamiltonian} and to the positive function $\mathcal{N}$ as the
\defn{conformal factor}. It is not hard to see that their flow  preserves the smooth measure $\mu= \mathcal{N}^{-1}\, \d q \wedge \d p$ on 
$\R^{2n}$ and the
Hamilton function $H$ is a first integral (see Section~\ref{S:ConfHam}).

Our main motivation to consider this kind of equations comes from a  remarkable
class of nonholonomic mechanical systems with symmetry, commonly known as 
\defn{Hamiltonizable $G$-Chaplygin systems}, 
whose reduced dynamics 
have this structure (see e.g. \cite{Chaplygin, Stanchenko, FedJov, Ehlers2005,Borisov2007,Fernandez2009,LGN18}, and references therein).

The geometric integration of Equations~\eqref{eq:itro-conformal} has previously been 
considered by Hairer~\cite{hairer1997variable} and 
Reich~\cite{reich1999backward} in connection
with the design of symplectic integrators with variable time step (see also \cite[Section VIII.2]{hairer2006geometric} and \cite[Chapter 9]{leimkuhler2004simulating}),
and also by Fernandez et al~\cite{FernandezBlochOlver2012} with our same motivation of
application to nonholonomic Hamiltonizable $G$-Chaplygin systems. The strategy followed by
these references  to discretize  a trajectory of 
Equations~\eqref{eq:itro-conformal} with 
initial condition $(q_0,p_0)$ is to
replace Equations~\eqref{eq:itro-conformal} by the \defn{altered system}
\begin{equation}
\label{eq:itro-conformal-altered}
\begin{split}
\frac{ \d q}{\d t} &= \mathcal{N}(q,p) \frac{\partial H}{\partial p} (q,p)
+\frac{\partial \mathcal N}{\partial p} (q,p)(H(q,p)-E), \\ \frac{ \d p}{\d t} &= -\mathcal{N}(q,p) \frac{\partial H}{\partial q} (q,p)-\frac{\partial \mathcal N}{\partial q} (q,p)(H(q,p)-E),
\end{split}
\end{equation}
where $E=H(q_0,p_0)$.  The altered Equations~\eqref{eq:itro-conformal-altered} agree
with Equations~\eqref{eq:itro-conformal} on the level set $\{H=E\}$ 
but have the advantage of being Hamiltonian 
with respect to the \defn{altered Hamiltonian}\footnote{The introduction of the altered system and Hamiltonian is sometimes called the Poincaré or (Darboux-)Sundman transformation and has been known for over a century~\cite{levi1906resolution}.}  $K_E(q,p)=\mathcal{N}(q,p)(H(q,p)-E)$, 
and hence may be discretized with a standard  symplectic integrator $\Psi$. 

In this paper we refine the method described above by replacing $E=H(q_0,p_0)$
in Equations~\eqref{eq:itro-conformal-altered} with a more sophisticated choice 
$E=\mathcal{E}(q_0,p_0)$, 
that carries
valuable information about the symplectic integrator $\Psi$ and its backward error analysis. By doing so, we obtain
a ``structure-preserving'' discretization of the conformally Hamiltonian vector field $X$ defined by Equations~\eqref{eq:itro-conformal}. By structure-preserving we mean that the discrete flow is interpolated by the flow of a certain conformally Hamiltonian vector
field $X_{mod}$ on $\R^{2n}$ whose Hamiltonian is the function $\mathcal{E}$ 
(which is close  to $H$) and whose conformal factor is a certain function 
 $\mathcal{N}_{mod}\in C^\infty(P)$ that is close to $\mathcal{N}$. 
 For this reason, we will call $\mathcal{E}$ the \defn{modified conformal Hamiltonian}.
In particular, our discrete scheme preserves the smooth measure $\mu_{mod}=\mathcal{N}_{mod}^{-1}\, \d q\wedge \d p$, which is close to the
the invariant measure $\mu$ of $X$, and has good energy behavior. Our interpolation result
can be understood as  an extension of a well-known result that explains the 
good performance of symplectic integrators in the approximation 
of classical Hamiltonian vector fields \cite{benettin1994hamiltonian}.

\subsection{Application to nonholonomic mechanics}

In recent years there has been a large number of publications (e.g.~\cite{Cortes2001,McPerlm2006,Ferraro_2008,Kobilarov2010,Ferraro_2015} and others)
attempting to develop a geometric discretization of nonholonomic systems that provides an  
extension of variational and symplectic integrators of Hamiltonian systems. 
Such works are usually concerned with general nonholonomic systems whose flow
generally possesses no other invariants besides energy and time-reversibility. The usefulness 
of these integrators is assessed by their energy behavior, their preservation
of the constraints and momentum first integrals (if present), and their performance in simple test problems. There are   many
open questions  in this area; we mention the recent work of       
Modin and Verdier \cite{modin2017} proving that the good performance of  some of these integrators 
is often due to a bias in the choice of the test problems.
 The underlying difficulty is that, in general, the geometry of nonholonomic systems is not as rich and well-understood
as that of Hamiltonian systems.

\paragraph{The discretization of nonholonomic constraints and $G$-Chaplygin systems.}
The constraints of most nonholonomic systems found in applications
are  linear in the velocities and define a sub-bundle $D\subset TM$, where
$M$ is the configuration space. The constraint space $D$ thus has a clear and purely geometric interpretation.
A main difficulty in the geometric  discretization of nonholonomic systems is to 
adequately come up with a ``geometric'' discrete counterpart of $D$. This problem
leads to the notion of ``exact discrete constraint manifold'' which
is a submanifold of $M\times M$ introduced by McLachlan and Perlmutter~\cite[Section 7.1]{McPerlm2006}
(see also the recent 
preprint~\cite{alex2020}). In contrast with the continuous constraint space $D$,
 the exact discrete constraint manifold has more of a  dynamic than
geometric nature, 
 since it is obtained by advancing
points on $M$ (with allowed initial velocities) by the flow of the continuous system.

The difficulty of geometrically discretizing the nonholonomic 
constraints is avoided in our work by restricting our attention to the 
discretization  of the reduced equations of the  so-called nonholonomic {\em $G$-Chaplygin systems}.   For these systems, the dynamics
is equivariant under the action of the Lie group $G$, and projects to the reduced space $D/G$ which is isomorphic to 
$TQ$ where $Q:=M/G$ is the {\em shape space}. This allows us to work with
  the standard discretization $Q\times Q$ of the reduced space $TQ$.  We refer the
  reader to \cite{Stanchenko, Koi,BKMM, CaCoLeMa,FedJov,GNMarr2018} for definition and properties of
  $G$-Chaplygin systems (also known as {\em generalized Chaplygin systems} or
  as the {\em principal or purely kinematic case} in \cite{BKMM}).

\paragraph{Hamiltonizable $G$-Chaplygin systems} The methods developed in this paper provide a discretization of
a remarkable subclass of $G$-Chaplygin systems which possess an extraordinary geometric structure which
allows one to write their reduced equations of motion in the form \eqref{eq:itro-conformal}. As mentioned above,
these are often called Hamiltonizable $G$-Chaplygin systems and their study goes back 
to the classic work of Chaplygin \cite{Chaplygin}.
Examples of nonholonomic systems within this family are the
so-called {\em $\phi$-simple systems}, recently found 
in \cite{LGN18,GNMarr2018} and described in Section~\ref{S:phi-simple}. They include 
the nonholonomic particle, the Veselova problem \cite{veselov1988integrable} and some of its multidimensional generalizations \cite{FedJov},
the  rubber Routh sphere \cite{borisov2008conservation, borisov2013hierarchy} and its multidimensional generalization \cite{LGN18}, the rubber generalization
of a problem of Woronetz \cite{borisov2013hierarchy}, and others.

\paragraph{Measure preservation.} A fundamental property of Hamiltonizable $G$-Chaplygin systems is that  
they possess a smooth invariant measure and the key contribution of our proposed discretization is that it is
measure preserving. To the best of our knowledge,  the problem of finding a measure preserving discretization of 
nonholonomic systems possessing a smooth invariant measure  
had not been considered before. In particular, the work of Fernandez et al~\cite{FernandezBlochOlver2012}, that 
is also concerned with the discretization of Hamiltonizable $G$-Chaplygin systems, does not address this issue.
In fact, our numerical experiments   indicate that the discretization 
proposed in this reference has poor  measure preservation 
properties when compared to our method (see Figures
\ref{fig:measures} and \ref{fig:measure-naive}, and the discussion
in Section~\ref{ss:comparison}).

The reader should be aware that preservation of a smooth measure by a nonholonomic system is an extraordinary property that should not be expected to hold  unless the system is rich in symmetries
(see e.g. the conclusions on the examples treated in~\cite{Fed2015}).
Moreover, 
there are examples of $G$-Chaplygin systems possessing an invariant measure
that do not allow a Hamiltonization (e.g. \cite{jovanovic2019}), and our discretization does not apply to them.

\subsection{Structure of the paper}

We begin by giving a formal definition
and main properties of conformally Hamiltonian systems and the associated
altered systems
in Section~\ref{S:ConfHam}.
We then explain how conformally Hamiltonian systems arise in nonholonomic $\phi$-simple Chaplygin
systems in Section~\ref{S:phi-simple}. In this section
we also introduce the nonholonomic particle as an 
example that will later serve for numerical experiments.
Section~\ref{sec:discretization} is 
the core of the paper. We first present a review of
symplectic integrators and modified equations in 
Section~\ref{sect-symplectic} and then recall the 
discretization of conformally Hamiltonian 
systems used in~~\cite{hairer1997variable,reich1999backward,FernandezBlochOlver2012} in Section~\ref{ss:previous-discretization}. 
The original results of the paper are presented in 
Section~\ref{ss:our-discretization} where
we define the modified conformal Hamiltonian $\mathcal{E}$ in Definition~\ref{def:mod-conf-Ham} and introduce our discretization in Equation \eqref{eqn:Phi_h}. 
Our main result is formulated in Theorem~\ref{Th:Main} that states that our discretization is structure preserving. The measure preservation properties of our
method are presented as Corollary~\ref{cor-discrete-measure}. We
then present some remarks on the
implementation of the method in 
Section~\ref{sec:implementation}. Finally, 
we present numerical experiments in Section~\ref{sec:numerics} and summarize our
conclusions in Section~\ref{s:conclusions}.
The paper also contains Appendix~\ref{appendix} with
expressions of the second order terms
of the power series expansions that are relevant for the
numerical experiments of Section~\ref{sec:numerics}.

\section{Conformally Hamiltonian systems}
\label{S:ConfHam}

In this section we define conformally Hamiltonian systems and state their main properties. As explained in the introduction, in our terminology ``conformally Hamiltonian'' is equivalent to time-reparametrized Hamiltonian systems. The reader is warned  the terminology ``conformal
Hamiltonian'' and  ``conformally symplectic''  appear in the literature with a completely different meaning  (e.g. \cite{mclachlan2001conformal,CCLl2013} and others).

 Throughout this section we work with conformally Hamiltonian 
systems on a general symplectic manifold $(P,\Omega)$. Later, in section
\ref{sec:discretization} we will require $P=\R^{2n}$ and 
$\Omega=\d q \wedge \d p$.

\subsection{Definition and main properties}
\label{SS:Defn&properties}

\begin{definition}
\label{def:confHam}
Let $(P, \Omega)$ be a symplectic manifold. The vector field $X$ on $P$ is called \defn{conformally Hamiltonian} if it satisfies
\begin{equation}
\label{eqn:confHam}
\mathbf{i}_{ X} \Omega = \mathcal{N} \d H
\end{equation}
for functions $H,\mathcal{N}\in C^\infty(P)$, where $\mathcal{N}$ is strictly positive. We say that $H$ is the \defn{Hamiltonian} and $\mathcal{N}$ is the \defn{conformal factor}.
\end{definition}

  In canonical  coordinates $x=(q,p)$ the conformally Hamiltonian vector field $X$ defines
the equations
\begin{equation}
\label{eq:conformal}
\frac{ \d q}{\d t}= \mathcal{N}(q,p) \frac{\partial H}{\partial p} (q,p), \qquad \frac{ \d p}{\d t}= -\mathcal{N}(q,p) \frac{\partial H}{\partial q} (q,p).
\end{equation}

Let $Y$ denote the (canonically) Hamiltonian vector field with the same Hamilton function $H$,\@ i.e. $\mathbf{i}_{Y} \Omega =  \d H$. Then we have $X=\mathcal{N} Y$.
The scaling of $X$ by $\mathcal{N}^{-1}$ that turns it into a Hamiltonian vector
field may be interpreted as a 
time reparametrization. In fact, the vector fields  $X$ and $Y$ have the same trajectories and their flows 
only differ by the speed at which these trajectories  are traversed.  It is common to say that $X$ is Hamiltonian
in the new time $\tau$ that is related to the original time $t$ by $\d t=\mathcal{N}(x)\d \tau$, where
$x\in P$.

We collect the main properties of conformally Hamiltonian systems in the following.
\begin{proposition}
\label{P:properties-confHam}
Let $X$ be a conformally Hamiltonian vector field on the symplectic manifold $(P,\Omega)$, then  
\begin{enumerate}
\item  the Hamiltonian $H$ is constant along the flow of $X$;
\item  We have
\begin{equation*}
\pounds_X \Omega = \d\mathcal{N} \wedge \d H,
\end{equation*}
where $\pounds$ is the Lie derivative operator;
\item The volume form 
\begin{equation*}
\mu = \mathcal{N}^{-1} \Omega^n,
\end{equation*}
is invariant under the flow of $X$, where $2n$ is the dimension of $P$.
\end{enumerate}
\end{proposition}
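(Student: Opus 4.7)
The plan is to verify the three statements in order, using Cartan's magic formula $\pounds_X = \d \circ \mathbf{i}_X + \mathbf{i}_X \circ \d$ together with the fact that $\Omega$ is closed. None of the steps should run into serious difficulty; the only thing requiring a little care is the computation of $\mathbf{i}_X \Omega^n$ in part (iii).

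For (i), I would take the interior product of both sides of the defining equation $\mathbf{i}_X\Omega = \mathcal{N}\, \d H$ with $X$ itself. The left-hand side gives $\Omega(X,X)=0$ by antisymmetry, while the right-hand side gives $\mathcal{N}\cdot \d H(X) = \mathcal{N}\, \pounds_X H$. Since $\mathcal{N}>0$ pointwise, this forces $\pounds_X H = 0$, so $H$ is a first integral.

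For (ii), the computation is immediate from Cartan's formula:
\begin{equation*}
\pounds_X \Omega \;=\; \d(\mathbf{i}_X \Omega) + \mathbf{i}_X(\d \Omega) \;=\; \d(\mathcal{N}\, \d H) + 0 \;=\; \d \mathcal{N} \wedge \d H,
\end{equation*}
using $\d\Omega=0$ and $\d(\d H)=0$.

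For (iii), apply Cartan's formula to $\mu=\mathcal{N}^{-1}\Omega^n$. Since $\mu$ is a top form on $P$ we have $\d\mu=0$, so $\pounds_X \mu = \d(\mathbf{i}_X \mu)$. The key calculation is to simplify $\mathbf{i}_X\mu$: using that $\mathbf{i}_X$ is a derivation on forms,
\begin{equation*}
\mathbf{i}_X \Omega^n \;=\; n\, \Omega^{n-1}\wedge \mathbf{i}_X \Omega \;=\; n\,\mathcal{N}\, \Omega^{n-1}\wedge \d H,
\end{equation*}
so the factors of $\mathcal{N}$ cancel and $\mathbf{i}_X\mu = n\, \Omega^{n-1}\wedge \d H$. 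Applying $\d$ and expanding by the Leibniz rule gives $n\,\d(\Omega^{n-1})\wedge \d H + (-1)^{2n-2} n\,\Omega^{n-1}\wedge \d(\d H)$, which vanishes term by term because $\Omega$ is closed (hence so is $\Omega^{n-1}$) and $\d^2=0$. Thus $\pounds_X \mu=0$, as required.

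The only mild subtlety is the identity $\mathbf{i}_X\Omega^n = n\,\Omega^{n-1}\wedge \mathbf{i}_X\Omega$, which I would either invoke as a standard derivation rule or verify in one line by induction on $n$; no computation in canonical coordinates is needed, and the proof works on an arbitrary symplectic manifold.
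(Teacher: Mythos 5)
Your proof is correct, but it takes a somewhat more self-contained route than the paper's. For (i) the two arguments coincide: contract $\mathbf{i}_X\Omega=\mathcal{N}\,\d H$ with $X$ and use skew-symmetry. For (ii) and (iii), however, the paper works through the decomposition $X=\mathcal{N}Y$ with $Y$ the genuine Hamiltonian vector field: part (ii) follows from the Leibniz rule $\pounds_{\mathcal{N}Y}\Omega=\mathcal{N}\pounds_Y\Omega+\d\mathcal{N}\wedge\mathbf{i}_Y\Omega$ together with $\pounds_Y\Omega=0$, and part (iii) from the observation $\mathbf{i}_X\mu=\mathbf{i}_Y\Omega^n$ followed by an appeal to Liouville's theorem. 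You instead apply Cartan's formula directly to $X$, computing $\pounds_X\Omega=\d(\mathcal{N}\,\d H)$ for (ii) and $\mathbf{i}_X\Omega^n=n\,\Omega^{n-1}\wedge\mathbf{i}_X\Omega=n\,\mathcal{N}\,\Omega^{n-1}\wedge\d H$ for (iii), so that the $\mathcal{N}$-factors cancel and closedness of $\Omega$ and $\d^2=0$ finish the job. Your version has the advantage of using only the defining relation \eqref{eqn:confHam} and standard exterior calculus, with no input from the theory of Hamiltonian flows; the paper's version is shorter at the price of citing $\pounds_Y\Omega=0$ and Liouville's theorem, and it makes the conceptual point that everything reduces to the time reparametrization $X=\mathcal{N}Y$. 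The antiderivation identity $\mathbf{i}_X\Omega^n=n\,\Omega^{n-1}\wedge\mathbf{i}_X\Omega$ that you flag is indeed standard (and the sign works out because $\Omega$ has even degree), so there is no gap.
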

\begin{proof}  For (i) note that \eqref{eqn:confHam} implies $\d H(X)=\mathcal{N}^{-1}\Omega(X,X)=0$, by skew-symmetry of $\Omega$.

For (ii) we use the standard properties of the Lie derivative to compute
\begin{equation*}
 \pounds_X \Omega =\pounds_{\mathcal{N}Y} \Omega  =\mathcal{N}  \pounds_Y \Omega + \d\mathcal{N}  \wedge \mathbf{i}_{ Y} \Omega
=\d\mathcal{N} \wedge \d H,
\end{equation*}
since 
 $\pounds_Y \Omega =0$ because $Y$ is Hamiltonian.
 
 For (iii) we use Cartan's magic formula and the fact that $\d\mu=0$ to obtain
 \begin{equation*}
\pounds_X \mu =  \d ({\bf i}_X\mu) =\d ({\bf i}_X( \mathcal{N}^{-1} \Omega^n ) )  =\d ({\bf i}_Y( \Omega^n ) ) =\pounds_Y \Omega^n =0,
\end{equation*}
where the last equality follows from Liouville's Theorem.
 \end{proof}

 Items (i) and  (ii) in the above proposition imply that the restriction of $X$ to the level sets of $H$ preserves the pull-back of the symplectic form $\Omega$ to these level sets, a property that also holds for Hamiltonian vector fields.

In order to show that the restriction of $X$ to a level set of $H$ coincides with the restriction of a true Hamiltonian vector field to this level set, we introduce the \defn{altered Hamilton function} $K_E\in C^\infty(P)$ depending parametrically on 
$E\in \R$ by
\begin{equation}\label{eqn:altHam}
K_E(x)= \mathcal{N}(x) (H(x)-E).
\end{equation}
We will sometimes find it convenient to denote $K_E(x)=K(x;E)$.
Note that the zero level set of $K_E$ coincides with the $E$-level set of $H$.

 Fix $E\in \R$ and
denote by $X_{K_E}$ the corresponding Hamiltonian vector field. Namely, $X_{K_E}$ is characterised by
the condition 
\begin{equation}
\label{eqn:def-X_K}
\mathbf{i}_{ X_{K_E}} \Omega = \d{K_E}.
\end{equation}

\begin{proposition}
\label{P:Alt-Ham}
The vector fields $X$ and $X_{K_E}$ agree when restricted to the the level set  $\{H=E\}$ (or, equivalently, to the zero
level set of $K_E$).
\end{proposition}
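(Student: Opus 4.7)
The plan is a short direct computation exploiting the product structure of the altered Hamiltonian $K_E = \mathcal{N}(H-E)$.

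First I would apply the Leibniz rule to obtain
\begin{equation*}
\d K_E = (H-E)\, \d \mathcal{N} + \mathcal{N}\, \d H.
\end{equation*}
The crucial observation is that the first term is proportional to the factor $H-E$, which vanishes identically on the level set $\{H=E\}$. Hence, pointwise on this level set,
\begin{equation*}
\d K_E = \mathcal{N}\, \d H.
\end{equation*}

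Next I would combine this with the two defining relations \eqref{eqn:confHam} and \eqref{eqn:def-X_K}, which give $\mathbf{i}_X \Omega = \mathcal{N}\, \d H$ and $\mathbf{i}_{X_{K_E}}\Omega = \d K_E$ respectively. Together with the equality above, this yields $\mathbf{i}_X \Omega = \mathbf{i}_{X_{K_E}} \Omega$ at every point of $\{H=E\}$.

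Finally, since $\Omega$ is a symplectic form and hence non-degenerate, the map $Z \mapsto \mathbf{i}_Z \Omega$ from tangent vectors to cotangent vectors is an isomorphism at every point. Therefore the equality of the contractions forces $X = X_{K_E}$ on $\{H=E\}$. There is no real obstacle here: the whole argument rests on noticing that the $(H-E)\,\d\mathcal{N}$ piece drops out on the level set, which makes the conformal factor $\mathcal{N}$ absorbable into the Hamiltonian structure precisely on $\{H=E\}$.
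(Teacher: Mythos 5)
Your proposal is correct and follows essentially the same route as the paper: both rest on the identity $\d K_E = \mathcal{N}\,\d H$ on the level set $\{H=E\}$ (which the paper asserts and you derive explicitly via the Leibniz rule) combined with the defining relations \eqref{eqn:confHam} and \eqref{eqn:def-X_K} and the non-degeneracy of $\Omega$. You have simply made explicit two steps that the paper leaves implicit.
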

\begin{proof}
The claim follows from the relation
\begin{equation*}
\mathcal{N}(x) \d H(x)=\d K_E(x),
\end{equation*}
which holds for all $x$ in the level set in question, and the relations \eqref{eqn:confHam} and \eqref{eqn:def-X_K} that 
respectively define $X$
and $X_{K_E}$.
\end{proof}
As mentioned in the introduction, the consideration of the   altered Hamiltonian $K_E$ and its corresponding vector field $X_{K_E}$ is well-known~\cite{levi1906resolution}
and is sometimes referred to as the Poincaré or (Darboux-)\hspace{0pt}Sundman transformation.

\paragraph{Momentum maps and Noether's Theorem}
Suppose that a Lie group $G$ defines a Hamiltonian action on the symplectic manifold $(P,\Omega)$. This means
that $G$ acts by symplectomorphisms and there exists a momentum map
\begin{equation*}
J:M\to \g^*,
\end{equation*}
where $\g$ is the Lie algebra of $G$, satisfying
\begin{equation*}
\d J_\xi = {\bf i}_{\xi_P}\Omega, \quad \mbox{for all $\xi\in \g$},
\end{equation*}
where $J_\xi\in C^\infty(P)$ is defined as $J_\xi(x)=\langle J(x),\xi\rangle$ and $\xi_P$ is the vector field on 
$P$ defined by the infinitesimal action of $\g$ on $P$.

It is well-known that if $H\in C^\infty(P)$ is $G$-invariant then $J$ is constant along the flow of the Hamiltonian
vector field $Y=X_H$. This is commonly referred to as ``Noether's theorem''. The same property holds for the conformally Hamiltonian vector field $X=\mathcal{N}Y$ since, as mentioned before, the trajectories of $X$ and $Y$ coincide.

\section{Application in nonholonomic mechanics}
\label{S:phi-simple}

Conformally Hamiltonian systems as defined by \eqref{def:confHam} arise in the study of certain nonholonomic
systems with symmetry. Concretely, in the so-called $G$-Chaplygin systems, which have received wide attention
in recent years (see
e.g. \cite{Stanchenko,Koi,BKMM,CaCoLeMa,FedJov,Ehlers2005,GNMarr2018} and others).

For these systems the Lie group $G$ acts freely and properly on the configuration manifold $M$ and its lift to $TM$ leaves  the Lagrangian and the constraints invariant. Moreover, the  group orbits are assumed to be transversal and have complementary dimension  to the allowed directions defined by the constraints at each point of the configuration space  (see e.g. \cite{Koi,BKMM} for precise definitions).

The reduced equations of a $G$-Chaplygin system take the form of an unconstrained  forced mechanical system on the
\defn{shape space} $Q=M/G$. In terms of the \emph{reduced Lagrangian} $L:TQ\to \R$ and in local coordinates one has 
\begin{equation}
\label{eq:forced-syst}
\frac{\d}{\d t} \left ( \frac{\partial L}{\partial \dot q} \right ) -  \frac{\partial L}{\partial  q}= F(q,\dot q),
\end{equation}
for a certain  force $F(q,\dot q)$ that is \defn{gyroscopic}: it does not do work along the motion. It in fact satisfies
\begin{equation*}
\langle  F(q,\dot q), \dot q \rangle =0,
\end{equation*}
where $\langle \cdot , \cdot \rangle$ is the pairing between covectors and vectors.
We refer the reader to \cite{Stanchenko,Koi,BKMM} for details.

\subsection[phi-simple Chaplgyin systems]{$\phi$-simple Chaplgyin systems}
\label{SS:phi-simple}

Recently \cite{LGN18,GNMarr2018}, a remarkable class of  $G$-Chaplygin systems was discovered which are 
conformally Hamiltonian. These are the so-called \defn{$\phi$-simple systems}, which, according
to the results of \cite{LGN18,GNMarr2018},  allow the following expression for the gyroscopic 
force $F$:
\begin{equation}
\label{eq:phi-simple}
F(q,\dot q) = \left \langle  \frac{\partial L}{\partial \dot q} , \dot q \right \rangle  \frac{\partial \phi}{\partial  q}
-  \left \langle   \frac{\partial \phi}{\partial  q} , \dot q \right \rangle  \frac{\partial L}{\partial \dot q},
\end{equation}
for a certain function $\phi \in C^\infty (Q)$. This form of the force $F$ is invariant under changes of coordinates.

We now  show that these systems are indeed conformally Hamiltonian.  We begin by defining  the standard
Legendre transformation and Hamiltonian $\mathcal{H}\in C^\infty(T^*Q)$ by
\begin{equation*}
m=  \frac{\partial L}{\partial \dot q}(q,\dot q) , \qquad \mathcal{H}(q,m)=\langle m, \dot q\rangle -L(q,\dot q).
\end{equation*}
As usual, we assume that $L$ is hyper-regular so that the 
 first of these equations may be inverted to express $\dot q$ as a function of 
$q, m$. Performing the usual chain rule calculations, and assuming that \eqref{eq:phi-simple} holds, we rewrite equation \eqref{eq:forced-syst} as the first order system
\begin{equation*}
\dot q=  \frac{\partial \mathcal{H}}{\partial m}(q,m), \qquad \dot m = -  
 \frac{\partial \mathcal{H}}{\partial q}(q,m) + \left \langle m , 
 \frac{\partial \mathcal{H}}{\partial m}(q,m) \right \rangle   \frac{\partial \phi}{\partial  q}  - \left \langle   \frac{\partial \phi}{\partial  q} ,  \frac{\partial \mathcal{H}}{\partial m}(q,m) \right \rangle m,
\end{equation*}
which is equivalent to
\begin{equation}
\label{eqn:phi-simple-aux}
\dot q=  \frac{\partial \mathcal{H}}{\partial m}(q,m), \qquad \exp(-\phi (q)) \frac{\d}{\d t}\left ( \exp(\phi (q)) m  \right )
= -   \frac{\partial \mathcal{H}}{\partial q}(q,m) + \left \langle m , 
 \frac{\partial \mathcal{H}}{\partial m}(q,m) \right \rangle   \frac{\partial \phi}{\partial  q}.
\end{equation}

We now introduce  the rescaled momenta $p$ 
and Hamiltonian  $H(q,p)$   by
\begin{equation*}
p=\exp ( \phi (q)) m, \qquad H(q,p)=\mathcal{H}(q,\exp ( -\phi (q))p).
\end{equation*}
By the chain rule we have
\begin{equation*}
\begin{split}
\frac{\partial H}{\partial q}(q,p)&= \frac{\partial \mathcal{H}}{\partial q}(q,m) - \left \langle  \frac{\partial \mathcal{H}}{\partial m}(q,m),
m\right \rangle     \frac{\partial \phi}{\partial  q}, \\ 
\frac{\partial H}{\partial p}(q,p)&=\exp ( -\phi (q)) \frac{\partial \mathcal{H}}{\partial m}(q,m).
\end{split}
\end{equation*}
Therefore, Equations \eqref{eqn:phi-simple-aux} may be rewritten as the conformally Hamiltonian system
\[
\dot q= \mathcal{N}(q) \frac{\partial H}{\partial p} (q,p), \qquad \dot p= -\mathcal{N}(q) \frac{\partial H}{\partial q} (q,p),
\]
with conformal factor $\mathcal{N}(q) = \exp ( \phi (q))$.

\begin{example}
\label{ex-np}
The nonholonomic particle considered in \cite{BS93} is a test example for nonholonomic mechanics. It concerns
the motion of a particle in $\R^3$ subject to the constraint 
\begin{equation}
\label{eqn:constraint}
\dot z-y \dot x=0.
\end{equation}
We assume that the Lagrangian of the system is of the form
\begin{equation}
\label{eqn:Lag}
\mathcal{L}=\frac{1}{2}(\dot x^2+\dot y^2 +\dot z^2) - U(x,y),
\end{equation}
where $U(x,y)$ is some potential. The Lagrange-d'Alembert principle leads to the equations of motion
\begin{equation*}
\ddot x = - \frac{\partial U}{\partial x} - \lambda y, \quad \ddot y = - \frac{\partial U}{\partial y}, \quad
 \ddot z=\lambda,
\end{equation*}
where $\lambda$ is a Lagrange multiplier. Differentiating the constraint \eqref{eqn:constraint} 
leads to  $\lambda =\dot x\dot y + y \ddot x$, and
so, the equations of motion may be written as
\begin{equation}
\label{eqn:red-eqns}
(1+y^2)\ddot x=  -y\dot y \dot x- \frac{\partial U}{\partial x}, \qquad \ddot y=
  - \frac{\partial U}{\partial y}.
\end{equation}
together with the constraint equation \eqref{eqn:constraint}. Equations \eqref{eqn:red-eqns}
are the reduced equations which, as we will now show, have the form anticipated by \eqref{eq:forced-syst}.  The symmetry 
group is $G=\R$ acting by translations on $z$ and the shape
space $Q=\R^2$ with coordinates $(x,y)$.

The reduced  Lagrangian $L:TQ\to \R$ is obtained by substituting the constraint \eqref{eqn:constraint} into
the Lagrangian $\mathcal{L}$ given by \eqref{eqn:Lag}. One gets
\begin{equation}
\label{eq:Lag-nh-particle}
L(x,y,\dot x, \dot y)=\frac{1}{2} ((1+y^2)\dot x^2+\dot y^2) - U(x,y).
\end{equation}
Its Euler-Lagrange expression is
\[ 
\frac{\d}{\d t} \left ( \frac{\partial L}{\partial \dot q} \right ) -  \frac{\partial L}{\partial  q}
= \begin{pmatrix} (1 + y^2) \ddot{x} + 2 y \dot{x}\dot{y} + \frac{\partial U}{\partial x}\\  \ddot{y} - y \dot{x}^2 + \frac{\partial U}{\partial y} \end{pmatrix} .
\]
Hence the equtions of motion \eqref{eqn:red-eqns} are indeed of the form \eqref{eq:forced-syst} with the gyroscopic force term given by 
\begin{equation*}
F(x,y,\dot x,\dot y)=\begin{pmatrix}  y\dot x \dot y \\  -y \dot x^2 \end{pmatrix}.
\end{equation*}
A direct calculation shows 
that $F$ may be expressed in the form \eqref{eq:phi-simple} with
\begin{equation*}
\phi(x,y)=-\frac{1}{2}\ln (1+y^2).
\end{equation*}

Proceeding as in Section \ref{SS:phi-simple}, we define the momenta and Hamiltonian 
\begin{equation*}
\begin{split}
&m_x= (1+y^2)\dot x, \qquad m_y= \dot y,  \\
& \mathcal{H}(x,y,m_xm_y)= \frac{1}{2} \left ( \frac{m_x^2}{1+y^2}+ m_y^2 \right ) +U(x,y).
\end{split}
\end{equation*}
Next we define the rescaled momenta
\begin{equation*}
p_x=\frac{m_x}{\sqrt{1+y^2}}, \qquad p_y=\frac{m_y}{\sqrt{1+y^2}},
\end{equation*}
and the Hamiltonian in these new variables
\[
H(x,y,p_x,p_y)=  \frac{1}{2} \left (p_x^2+ (1+y^2) p_y^2 \right )+U(x,y).
\]
The analysis in Section  \ref{SS:phi-simple} guarantees that the equations of motion may 
be written in conformally Hamiltonian form
\begin{equation*}
\begin{split}
&\dot x =\frac{1}{\sqrt{1+y^2}} \frac{\partial H}{\partial  p_x}, \qquad 
\dot y =\frac{1}{\sqrt{1+y^2}} \frac{\partial H}{\partial  p_y},  \\ 
 &\dot  p_x =-\frac{1}{\sqrt{1+y^2}} \frac{\partial H}{\partial  x}, 
\qquad\dot  p_y = -\frac{1}{\sqrt{1+y^2}} \frac{\partial H}{\partial y},
\end{split}
\end{equation*}
with conformal factor
\[
\mathcal{N}(y)= \frac{1}{\sqrt{1+y^2}}.
\]
 Explicitly, we have
\begin{equation}
\label{eqn:nh-flow}
\begin{split}
&\dot x =\frac{p_x}{\sqrt{1+y^2}}, \qquad 
\dot y =\sqrt{1+y^2} p_y,  \\ 
 &\dot  p_x =-\frac{1}{\sqrt{1+y^2}} \frac{\partial U}{\partial  x}, 
\qquad\dot  p_y = -\frac{1}{\sqrt{1+y^2}} \left ( yp_y^2 +  \frac{\partial U}{\partial y} \right ).
\end{split}
\end{equation}
Note that the group $\R$ defines a Hamiltonian action on $T^*\R^2$  by translations of $x$. If the potential $U$ is
independent of $x$ then so is the Hamiltonian $H$ and the 
 corresponding momentum $p_x$ is preserved by the flow as predicted by the discussion in Section \ref{SS:Defn&properties}.

The altered Hamilton function $K_E$ is given by
\begin{equation*}
K_E(x,y,p_x,p_y)= \frac{1}{\sqrt{1+y^2}} \left( \frac{1}{2} \left(p_x^2+ (1+y^2) p_y^2 \right)+ U(x,y) - E \right).
\end{equation*}
The corresponding Hamiltonian vector field $X_{K_E}$ is defined by a set of equations which coincide with 
the system \eqref{eqn:nh-flow}, except for the equation for $p_y$ which takes the form
\begin{equation*}
\dot  p_y = -\frac{1}{\sqrt{1+y^2}} \left ( yp_y^2 +  \frac{\partial U}{\partial y} \right )+ (H(x,y,p_x,p_y) - E)\frac{y}{(1+y^2)^{3/2}}.
\end{equation*}
\end{example}
Other examples of $\phi$-simple nonholonomic systems are the multi-dimensional generalizations
of  the 
Veselova problem (with special inertia 
tensor) treated in \cite{FedJov, FedJov2}, the  rubber Routh sphere \cite{GNRubberRouth2019}, and the motion of an axisymmetric rigid body that rolls
without slipping or spinning over a sphere that is fixed in an inertial plane \cite{LGN18}.

\section{Structure preserving discretization}
\label{sec:discretization}

This section contains the main results of the paper.  
We will construct a discretization of \eqref{eq:conformal} based on a symplectic discretization of the Hamiltonian system corresponding to the altered Hamiltonian $ K_E(q,p)= \mathcal{N}(q,p) (H(q,p)-E)$ 
from Proposition \ref{P:Alt-Ham}, where the parameter $E$ will be adjusted according to the initial condition. This approach is reminiscent of \cite{hairer1997variable,reich1999backward} and in particular \cite{FernandezBlochOlver2012}, but we propose a more refined strategy to pick the value of $E$, which will give our integrator a clear geometric structure.

The section is organized as follows. First we will recall some well-known facts on symplectic integrators and modified equations in Section \ref{sect-symplectic}. We then review the construction of  \cite{hairer1997variable,reich1999backward,FernandezBlochOlver2012} in detail
 in Section \ref{ss:previous-discretization}. Our discretization and main results are given in  Section~\ref{ss:our-discretization}.
 We prove that our discrete scheme is very nearly interpolated by a conformally Hamiltonian vector field in Theorem~\ref{Th:Main} and state its
 measure preservation properties in Corollary~\ref{cor-discrete-measure}. Finally, we discuss some
 aspects of the implementation of our method in Section~\ref{sec:implementation}.
 
 For the rest of the paper we will assume that the symplectic manifold $P$ in Definition \ref{def:confHam} equals $P=\R^{2n}=T^*Q$
where $Q=\R^n$, and $\Omega$ is the canonical symplectic form 
$\Omega = \d q \wedge \d p$ where  $x = (q,p)\in \R^n\times \R^n$
are  global linear coordinates.

\subsection{ Symplectic integrators and modified equations}
\label{sect-symplectic}

First we review some well-known concepts in geometric integration. Our presentation is limited to what we need in the present work. For a more exhaustive treatment of this topic we refer to \cite{hairer2006geometric, leimkuhler2004simulating}.

A \defn{consistent numerical integrator}  associates to a vector field $f(x) \partial_x$ on $P$ a map $\Psi_h: P \rightarrow P$ parametrized by a small step size $h > 0$ and satisfying
\[ \forall x \in P: \quad \Psi_h(x) = x + h f(x) + \mathcal{O}(h^2). \]
\emph{Consistent} refers to the fact that the first order term $f(x)$ matches the vector field.
If the map $\Psi_h$ preserves the symplectic form,
\[ \Psi_h^* \Omega = \Omega, \]
when the integrator is applied to a Hamiltonian vector field, then the integrator is called \defn{symplectic}. There are significant benefits to using symplectic integrators for the numerical approximation of Hamiltonian systems, such as the long-time near-conservation of the energy.

\paragraph{Symplectic integrators via variational integrators.}

An effective way to construct symplectic integrators uses the Lagrangian description of mechanics. One of its advantages is that it makes no assumptions on the structure of the Lagrangian, whereas some common symplectic methods require the Hamiltonian to be separable, i.e.\@ of the form $H(q,p) = K(p) + U(q)$. We will use this approach for the examples in Section \ref{sec:numerics}. 

Assume that the Hamiltonian $H: T^*Q \rightarrow \R$ is nodegenerate, i.e.\@ the Hessian matrix $H_{pp}(q,p)$ is everywhere
invertible, then by Legendre transformation we obtain a Lagrange function $L:TQ \rightarrow \R$, such that solutions $(q,p):[0,T] \rightarrow T^*Q$ to the Hamiltonian system project to the stationary curves $q:[0,T] \rightarrow Q$ for the action functional
\[ S[q] = \int_0^T L(q(t),\dot{q}(t)) \,\d t . \]
Consider the \defn{principal action} or \defn{exact discrete Lagrangian}
\[ L_\mathrm{exact}(q_0,q_1,T) = \int_0^T L(q(t),\dot{q}(t)) \,\d t , \]
where $q(t)$ in the right hand side is the unique stationary curve satisfying $q(0) = q_0$ and $q(T) = q_1$. A \defn{variational integrator} is defined by an approximation $L_d: Q \times Q \times \R$ of the exact discrete Lagrangian,
\[ L_d(q_0,q_1,h) = L_\mathrm{exact}(q_0,q_1,h) + \mathcal{O}(h^2) \]
as the step size $h$ tends to zero.
 (Note that $L_\mathrm{exact}(q_0,q_1,h) = \mathcal{O}(h)$, so the $\mathcal{O}(h^2)$ simply indicates that $L_d(q_0,q_1,h)$ and $L_\mathrm{exact}(q_0,q_1,h)$ agree at leading order.)
We then look for discrete curves $(q_0,q_1,\ldots,q_n)$ that are stationary points of the discrete action
\[ S_d(q_0,q_1,\ldots,q_n;h) = \sum_{j=1}^n L_d(q_{j-1},q_j,h) . \]
Such discrete curves are characterized by the equations
\[ \frac{\partial}{\partial q_j} L_d(q_{j-1},q_j,h) + \frac{\partial}{\partial q_j} L_d(q_j,q_{j+1},h) = 0 , \qquad j = 1,\ldots, n-1. \]
On solutions of this second order difference equation we can define the momentum
\[ p_j = \frac{\partial}{\partial q_j} L_d(q_{j-1},q_j,h) = - \frac{\partial}{\partial q_j} L_d(q_j,q_{j+1},h) . \]
The map $\Psi_h: T^*Q \rightarrow T^*Q: (q_j,p_j) \mapsto (q_{j+1},p_{j+1})$ defined by the above equations is well-known to be symplectic. Hence variational integrators are (equivalent to) symplectic integrators.

\paragraph{Modified equations.}

Symplectic integrators nearly conserve energy over long timescales. If the system has symmetries which are respected by the discretization, the same is true for the corresponding Noether integrals. This excellent numerical behavior can be explained using the concept of {\em modified equations}. This is an example of backward error analysis: instead of directly trying to measure the discretization error, we look for a modification of the continuous system that would have been discretized exactly. 

To derive the modified equation, suppose that 
$x(t) = (q(t),p(t))$ is a continuous curve interpolating discrete solutions, 
\[ x(t+h) = \Psi_h(x(t)) . \]
By Taylor expansion we can write this in terms of $x$ and its derivatives at time $t$ only:
\[ x + h \dot{x} + \frac{h^2}{2}\ddot{x} + \ldots = x + h d_1(x) + h^2 d_2(x) \ldots , \]
where the first order term $d_1$ coincides with the right hand side of the original ODE if the integrator is consistent. In the first order we find $\dot{x} = d_1(x) + \mathcal{O}(h)$, which we can use to simplify the series expansion to
\[ h \dot{x} + \frac{h^2}{2} d_1'(x) d_1(x) = h d_1(x) + h^2 d_2(x) + \mathcal{O}(h^2), \]
hence
\[ \dot{x} = d_1(x) + h\left( d_2(x) - \frac{1}{2} d_1'(x) d_1(x) \right) + \mathcal{O}(h^2). \]
Proceeding iteratively we find a differential equation where the right hand side is a power series in $h$:
\[ \dot{x} = f_0(x) + h f_1(x) + h^2 f_2(x) + \ldots \]
with $f_0 = d_1$, $f_1 = d_2(x) - \frac{1}{2} d_1'(x) d_1(x)$, \ldots.
This is the \defn{modified equation} for $\Psi_h$. Formally, solutions to the modified equation interpolate iterations of $\Psi_h$. We say ``formally'' because the power series in the modified equation usually does not converge.
Error bounds can nevertheless be obtained from it by truncating the power series at a suitable point and estimating the truncation error. For instance, solutions to the truncated modified equation $\dot{x} = f_0(x) + h f_1(x) + \ldots + h^\ell f_\ell(x)$ satisfy $\Psi_h(x(t)) = x(t+h) + \cO(h^{\ell+2})$.

A fundamental property of symplectic integrators is that when applied to a Hamiltonian system, the resulting modified equation is again Hamiltonian: there exists a \defn{modified Hamiltonian} $H_{mod}(x;h)$, which is also a power series in $h$, such that
\[ f_0(x) + h f_1(x) + h^2 f_2(x) + \ldots
= \begin{pmatrix} 0 & 1 \\ -1 & 0 \end{pmatrix} \nabla_x H_{mod}(x;h). \]
Just like the modified equation, we cannot expect the power series $H_{mod}(x;h)$ to converge, even for small $h$. However, up to a truncation error of arbitrarily high order in $h$, the modified Hamiltonian is a conserved quantity. This implies that the original Hamiltonian $H(x)$ is very nearly conserved by the symplectic integrator over long timescales. More details can be found in \cite{benettin1994hamiltonian,hairer1997lifespan,reich1999backward} and \cite[Chapter IX]{hairer2006geometric}. A similar theory exists on the level of variational integrators, where a modified Lagrangian can be found \cite{vermeeren2017modified}.

\subsection{Previous discretization of conformally Hamiltonian systems}
\label{ss:previous-discretization}

The discretization of a conformally Hamiltonian system 
\begin{equation}
\label{eq:conformal-sec4.2}
\frac{ \d q}{\d t}= \mathcal{N}(q,p) \frac{\partial H}{\partial p} (q,p), \qquad \frac{ \d p}{\d t}= -\mathcal{N}(q,p) \frac{\partial H}{\partial q} (q,p),
\end{equation}
 that was obtained in
previous works~\cite{hairer1997variable,reich1999backward,FernandezBlochOlver2012}
relies on the  application of a symplectic integrator to the altered system
\begin{equation*}
\frac{ \d q}{\d t} =  \frac{\partial K_E}{\partial p} (q,p), \qquad 
\frac{ \d p}{\d t} = - \frac{\partial K_E}{\partial q} (q,p),
\end{equation*}
which has a Hamiltonian structure and 
where the altered Hamiltonian $K_E$ is defined by~\eqref{eqn:altHam}. 

Let us denote such symplectic integrator by $\Psi_{h,E}$ 
emphasizing the dependence of the altered system
on the parameter $E$ (and where $h$ is the time step as before). For each
pair $(h,E)$ we have a map
\[ \Psi_{h,E}:\R^{2n} \to \R^{2n}: (q_j,p_j) \mapsto (q_{j+1},p_{j+1}). \] 
Given an initial condition $(q_0,p_0)$ for  the system~\eqref{eq:conformal-sec4.2},
the discretization proposed in~\cite{hairer1997variable,reich1999backward,FernandezBlochOlver2012}
is the map $\Psi_{h,E_0}$ where $E_0=H(q_0,p_0)$.

\subsection{Structure preserving discretization of conformally Hamiltonian systems}
\label{ss:our-discretization}

The main contribution of this paper is to improve the discretization of the previous section by refining the choice of $E = E_0$ in $\Psi_{h,E}$ with a more sophisticated one $E =\mathcal{E}(q_0,p_0)$ that carries information about the symplectic integrator $\Psi$ and its backward error analysis. We now proceed to explain how this is done.

First we consider  the modified Hamiltonian corresponding to $\Psi_{h,E}$,  depending parametrically on $E$, 
\begin{equation}\label{eq:DefK}
K_{mod}(q,p;h,E)= K_E(q,p)+h K_1(q,p;E)+h^2 K_2(q,p;E)+\dots .
\end{equation}
We call $K_{mod}$ the \defn{modified altered Hamiltonian}. 
Since $K_{mod}$ is defined as a formal power series, we will often work with its truncation,
\[ 
K_{mod}^{(\ell)} (q,p;h,E)= K_E(q,p)+h K_1(q,p;E)+\dots+h^\ell K_\ell(q,p;E) ,
\]
to make sure we have a well-defined function. The central object in our construction is defined below.

\begin{definition}
\label{def:mod-conf-Ham}
The \defn{modified conformal Hamiltonian} is the formal power series
\begin{equation}\label{eq:Eexp}
\mathcal{E}(q,p;h)=H(q,p)+h\mathcal{E}_1(q,p)+h^2\mathcal{E}_2(q,p)+\dots
\end{equation}
defined implicitly by 
\begin{equation}
\label{eq:DefE}
K_{mod}(q,p;h,\mathcal{E}(q,p;h))=0.
\end{equation}
\end{definition}

The existence of $\mathcal{E}$ is guaranteed by the implicit function theorem, which generalizes to formal power series by proceeding iteratively from leading order to higher orders. Indeed, for $h=0$ the map $\Psi_{0,E}$ is the identity transformation  in $\R^{2n}$
which is  interpolated by the trivial flow of the Hamiltonian $K_{mod}=0$, which is 
obtained by putting $\mathcal{E}(q,p;0)=H(q,p)$. Moreover, we have
\begin{equation*}
\frac{\partial K_{mod}}{\partial E}(q,p;0,H(q,p)) 
= \frac{\partial K_E}{\partial E}(q,p)
= -\mathcal{N}(q,p) \neq 0.
\end{equation*}
Hence, the assumptions of the implicit function theorem are satisfied.

The truncation after order $\ell$ of the modified conformal Hamiltonian will be denoted by
\[
\mathcal{E}^{(\ell)}(q,p;h)=H(q,p)+h\mathcal{E}_1(q,p)+\dots+h^\ell\mathcal{E}_\ell(q,p),
\]
and satisfies 
\[
K_{mod}^{(\ell)}(q,p;h,\mathcal{E}^{(\ell)}(q,p;h)) = \cO(h^{\ell+1}). 
\]

Our \defn{proposed discretization of the conformally Hamiltonian system} \eqref{eq:conformal-sec4.2} is given by the maps $\Phi_h^{(\ell)}:\R^{2n}\to \R^{2n}$, defined by
\begin{equation}
\label{eqn:Phi_h}
\qquad \Phi_h^{(\ell)}(q,p)=\Psi_{h,\mathcal{E}^{(\ell)}(q,p;h)}(q,p),
\end{equation}
for $\ell \in \mathbb{N}$. These maps depend on the modified conformal Hamiltonian and the parameter $\ell$ denotes the order to which it is calculated.  Note that if the symplectic integrator $\Psi_{h,E}$ is a consistent integrator, then so is $\Phi_h^{(\ell)}$.

In view of~\eqref{eq:Eexp} we notice that for $\ell = 0$ our discretization $\Phi_h^{(0)}$ coincides with the one proposed by \cite{hairer1997variable,reich1999backward,FernandezBlochOlver2012} and described in Section~\ref{ss:previous-discretization}.
We will show that by virtue of the higher order terms, the integrator $\Phi_h^{(\ell)}$
is very nearly interpolated by the flow of a conformally Hamiltonian system and, as a consequence, it very nearly preserves a smooth measure. These are our
main results which are rigorously formulated in  Theorem \ref{Th:Main}  and Corollary \ref{cor-discrete-measure} below. Before stating these results precisely, we give a purely formal sketch of the situation.

For the time being we ignore that the power series~\eqref{eq:DefK}, \eqref{eq:Eexp}, defining the modified quantities, usually do not converge. Then we could construct our discretization without any truncations. Let us denote this fictional method by $\Phi_h^{(\infty)}$. Then, the numerical solution defined by $\Phi_h^{(\infty)}$ with initial conditions $(q_0,p_0)$ is exactly interpolated by a solution to the Hamiltonian vector field with Hamiltonian $K_{mod}(q,p;h,E_0)$, where $E_0 = \mathcal{E}(q_0,p_0;h)$.
However, on the level set $M = \{(q,p) \in \R^{2n} \mid \mathcal{E}(q,p;h) = E_0 \}$ this Hamiltonian vector field coincides with the vector field of a conformally Hamiltonian system. Indeed, by differentiating \eqref{eq:DefE}  implicitly with respect to $q$ and $p$ respectively, we find
\begin{equation*}
\begin{split}
 \frac{\partial K_{mod}}{\partial q}(q,p;h,\mathcal{E}(q,p;h)) &= 
 \left ( - \frac{\partial K_{mod}}{\partial E}(q,p;h,\mathcal{E}(q,p;h)) \right ) 
 \frac{\partial \mathcal{E}}{\partial q}(q,p;h), \\
  \frac{\partial K_{mod}}{\partial p}(q,p;h,\mathcal{E}(q,p;h)) &= 
 \left ( - \frac{\partial K_{mod}}{\partial E}(q,p;h,\mathcal{E}(q,p;h)) \right ) 
 \frac{\partial \mathcal{E}}{\partial p}(q,p;h),
\end{split}
\end{equation*}
which upon evaluation on the level set $M$ yields
\begin{equation}
\label{eqn:mod-relation}
\begin{split}
 \frac{\partial K_{mod}}{\partial q}(q,p;h,E_0) &= 
 \left ( - \frac{\partial K_{mod}}{\partial E}(q,p;h,E_0) \right ) 
 \frac{\partial \mathcal{E}}{\partial q}(q,p;h), \\
  \frac{\partial K_{mod}}{\partial p}(q,p;h,E_0) &= 
 \left ( - \frac{\partial K_{mod}}{\partial E}(q,p;h,E_0) \right ) 
 \frac{\partial \mathcal{E}}{\partial p}(q,p;h).
\end{split}
\end{equation}
Hence, still ignoring convergence issues, we conclude
that the numerical solution in question is interpolated by a solution to the conformally Hamiltonian system with (modified) Hamiltonian $\mathcal{E}$ and (modified) conformal factor $\mathcal{N}_{mod}(q,p;h)$ given by
\begin{equation}
\label{eqn:conf-factor}
\mathcal{N}_{mod}(q,p;h) = -\frac{\partial K_{mod}}{\partial E}(q,p;h,E_0). 
\end{equation}
A rigorous statement in terms of truncations of these quantities is made in the theorem below (where we also argue that $\mathcal{N}_{mod}$ is strictly positive). 

\begin{theorem}
\label{Th:Main}
Fix $\ell \in \mathbb{N}$. Our proposed discretization $\Phi_h^{(\ell)}$ of Equation \eqref{eq:conformal-sec4.2} defined by \eqref{eqn:Phi_h}  satisfies
\[ \Phi_h^{(\ell)}(q(t),p(t)) = (q(t+h),p(t+h)) + \cO(h^{\ell+2}), \]
where $(q(t),p(t))$ is any solution of the system of differential equations
\begin{equation}
\label{eq:General2}
\begin{split}
 \dot q &= 
  - \frac{\partial K_{mod}^{(\ell)}}{\partial E}(q,p;h,\mathcal{E}^{(\ell)}(q,p;h))
 \frac{\partial \mathcal{E}^{(\ell)}}{\partial p}(q,p;h), \\
 \dot p &= 
  \frac{\partial K_{mod}^{(\ell)}}{\partial E}(q,p;h,\mathcal{E}^{(\ell)}(q,p;h))
 \frac{\partial \mathcal{E}^{(\ell)}}{\partial q}(q,p;h).
\end{split}
\end{equation}
Moreover, the above system is conformally 
Hamiltonian for sufficiently small $h$. 
\end{theorem}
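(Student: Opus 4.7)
The plan is to prove the two claims of the theorem separately. For the conformally Hamiltonian structure, I would rewrite \eqref{eq:General2} in the form
\[ \dot x = \mathcal{N}_{mod}^{(\ell)}(x;h)\, X_{\mathcal{E}^{(\ell)}}(x), \qquad \mathcal{N}_{mod}^{(\ell)}(x;h) := -\frac{\partial K_{mod}^{(\ell)}}{\partial E}\bigl(x;h,\mathcal{E}^{(\ell)}(x;h)\bigr), \]
where $X_{\mathcal{E}^{(\ell)}}$ is the canonical Hamiltonian vector field of $\mathcal{E}^{(\ell)}(\cdot;h)$. Since $K_E = \mathcal{N}(H-E)$, evaluating at $h=0$ gives $\mathcal{N}_{mod}^{(\ell)}(x;0) = \mathcal{N}(x) > 0$, so by continuity $\mathcal{N}_{mod}^{(\ell)}$ remains strictly positive on any compact set for $h$ sufficiently small.

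For the interpolation claim I would chain two estimates. Fix $x_0 = (q_0,p_0)$ and set $E_0 := \mathcal{E}^{(\ell)}(x_0;h)$. First, the standard backward error analysis of Section \ref{sect-symplectic}, applied to the symplectic integrator $\Psi_{h,E_0}$ for the genuinely Hamiltonian vector field $X^{E_0}$ of $K_{mod}^{(\ell)}(\cdot;h,E_0)$, gives
\[ \Phi_h^{(\ell)}(x_0) = \Psi_{h,E_0}(x_0) = \Xi_h^{E_0}(x_0) + \cO(h^{\ell+2}), \]
where $\Xi_h^{E_0}$ denotes the time-$h$ flow of $X^{E_0}$. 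Second, I would compare $\Xi_h^{E_0}(x_0)$ with $x(h)$, the time-$h$ solution of \eqref{eq:General2} starting from $x_0$. Implicit differentiation of $K_{mod}^{(\ell)}(x;h,\mathcal{E}^{(\ell)}(x;h)) = \cO(h^{\ell+1})$ with respect to $q,p$ shows that the Hamiltonian vector field of $K_{mod}^{(\ell)}$ at $E = \mathcal{E}^{(\ell)}(x;h)$ agrees with the right-hand side $Y$ of \eqref{eq:General2} up to $\cO(h^{\ell+1})$. To bridge this with $X^{E_0}$, which uses the fixed value $E_0$, I would observe that along the flow $\tilde x(t) := \Xi_t^{E_0}(x_0)$ the quantity $\mathcal{E}^{(\ell)}(\tilde x(t);h)$ deviates from $E_0$ by only $\cO(h^{\ell+1})$. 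This follows because $K_{mod}^{(\ell)}(\cdot;h,E_0)$ is an exact first integral of its own flow and equals $\cO(h^{\ell+1})$ at $x_0$, while $K_{mod}^{(\ell)}(\cdot;h,\mathcal{E}^{(\ell)}(\cdot;h))= \cO(h^{\ell+1})$ by definition; a mean value theorem in the $E$ slot, combined with the lower bound $-\partial_E K_{mod}^{(\ell)} = \mathcal{N} + \cO(h) > 0$, yields the estimate. A first-order Taylor expansion in the $E$ argument then upgrades this to $|X^{E_0}(\tilde x(t)) - Y(\tilde x(t))| = \cO(h^{\ell+1})$ along the trajectory, and a Gronwall argument with $Y$ Lipschitz uniformly in small $h$ gives $|\Xi_h^{E_0}(x_0) - x(h)| = \cO(h^{\ell+2})$, finishing the proof.

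The main technical obstacle is this propagation step: the implicit-function identity only holds up to $\cO(h^{\ell+1})$, and one needs this accuracy to survive along trajectories that themselves displace by $\cO(h)$ in phase space. The crucial fact is that $\mathcal{E}^{(\ell)}$ remains $\cO(h^{\ell+1})$-close to its initial value along the flow of $X^{E_0}$, even though $X^{E_0}$ is not the Hamiltonian vector field of $\mathcal{E}^{(\ell)}$. This near-conservation is not automatic; it is forced by the defining relation $K_{mod}^{(\ell)}(x;h,\mathcal{E}^{(\ell)}(x;h)) \approx 0$ together with the exact conservation of $K_{mod}^{(\ell)}(\cdot;h,E_0)$ along $X^{E_0}$.
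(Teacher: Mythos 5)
Your proposal is correct, but it organizes the error estimate differently from the paper. The paper's proof runs in one step: it takes a solution of \eqref{eq:General2}, notes that by Proposition \ref{P:properties-confHam}(i) the modified conformal Hamiltonian $\mathcal{E}^{(\ell)}$ is \emph{exactly} constant ($=E_0$) along it, because the system is conformally Hamiltonian with that Hamilton function; it then substitutes $E_0$ into the right-hand side and uses the truncated implicit-differentiation identity \eqref{eqn:mod-relation} to conclude that this solution satisfies the truncated modified equation of $\Psi_{h,E_0}$ with a defect of order $\cO(h^{\ell+1})$, from which the local error $\cO(h^{\ell+2})$ follows directly. You instead split the error via an intermediate flow $\Xi^{E_0}_h$ of the truncated modified Hamiltonian $K_{mod}^{(\ell)}(\cdot;h,E_0)$: standard backward error analysis handles $\Phi_h^{(\ell)}$ versus $\Xi^{E_0}_h$, and a vector-field comparison plus Gronwall handles $\Xi^{E_0}_h$ versus the flow of \eqref{eq:General2}. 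Because you perform the vector-field comparison along the flow of $\Xi^{E_0}$, where $\mathcal{E}^{(\ell)}$ is only approximately conserved, you need the extra lemma that $\mathcal{E}^{(\ell)}(\tilde x(t);h) = E_0 + \cO(h^{\ell+1})$; your derivation of it (exact conservation of $K_{mod}^{(\ell)}(\cdot;h,E_0)$ along its own flow, the defining relation of $\mathcal{E}^{(\ell)}$, and the mean value theorem in the $E$-slot together with $-\partial_E K_{mod}^{(\ell)} = \mathcal{N} + \cO(h)$ bounded away from zero on compacts) is sound, provided the $\cO$-constants are kept uniform on the compact set swept out by the trajectory, which smoothness guarantees. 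The paper's route buys economy: the exact conservation of $\mathcal{E}^{(\ell)}$ along \eqref{eq:General2} makes the substitution $E=E_0$ exact and renders your extra lemma unnecessary. Your route makes the two sources of error (truncation in the backward error analysis versus replacement of the variable energy by the fixed $E_0$) explicit and separately quantified, which is a legitimate and arguably more transparent bookkeeping. The treatment of the conformal structure and of the positivity of $\mathcal{N}_{mod}^{(\ell)}$ for small $h$ is identical in both arguments.
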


In other words, the conformally Hamiltonian equation \eqref{eq:General2} can be considered as a truncated modified equation for our discretization \eqref{eqn:Phi_h}.

\begin{proof} 
First note that Equations~\eqref{eq:General2} indeed have the structure of a conformally Hamiltonian system:  the Hamilton function is  $\mathcal{E}^{(\ell)}(q,p;h)$ and the conformal factor is
\begin{equation}
\label{Nmodl}
\mathcal{N}_{mod}^{(\ell)}(q,p;h) = -\frac{\partial K_{mod}^{(\ell)}}{\partial E} \big(q,p;h,\mathcal{E}^{(\ell)}(q,p;h) \big).
\end{equation}
We only need to show that this conformal factor is positive. For this we note that $K_{mod}^{(\ell)}(q,p;0,E)=K_E(q,p)=\mathcal{N}(q,p)(H(q,p)-E)$ and therefore $\mathcal{N}_{mod}^{(\ell)}(q,p;0) = \mathcal{N}(q,p) > 0$.
Thus, by continuity, and possibly restricting  $(q,p)$ to a compact subset of $\R^{2n}$, we conclude that $\mathcal{N}_{mod}^{(\ell)}(q,p;h)$ is 
indeed positive for small $h$.

Consider now a solution $(q(t),p(t))$ to Equation \eqref{eq:General2}. Then by item $(i)$ of Proposition \ref{P:properties-confHam}, the modified conformal Hamiltonian $\mathcal{E}^{(\ell)}(q(t),p(t);h)$ is a constant, which we denote by $E_0$. Hence this solution $(q(t),p(t))$ satisfies
\[ \dot q =  - \frac{\partial K_{mod}^{(\ell)}}{\partial E}(q,p;h,E_0)
 \frac{\partial \mathcal{E}^{(\ell)}}{\partial p}(q,p;h), \qquad 
\dot p =\frac{\partial K_{mod}^{(\ell)}}{\partial E}(q,p;h,E_0)
 \frac{\partial \mathcal{E}^{(\ell)}}{\partial q}(q,p;h). \]
Comparing this to the formal power series equation \eqref{eqn:mod-relation}, and truncating after the $h^\ell$-term, we see that $(q(t),p(t))$ satisfies
\[ \dot q = \frac{\partial K_{mod}^{(\ell)}}{\partial p}(q,p;h,E_0) + \cO(h^{\ell+1}), \qquad 
\dot p =- \frac{\partial K_{mod}^{(\ell)}}{\partial q}(q,p;h,E_0) + \cO(h^{\ell+1}). \]
In other words, $(q(t),p(t))$ satisfies the modified equation for $\Phi_h^{(\ell)}$ with a defect of order $\cO(h^{\ell+1})$, which implies that the local error between it and the numerical solution is $\cO(h^{\ell+2})$. (Note that we can pass from $\cO(h^{\ell+1})$ in the differential equation to $\cO(h^{\ell+2})$ in the local error, because we consider solutions over a time interval of length $h$.)
\end{proof}

Associated to the modified conformal factor \eqref{eqn:conf-factor} there is a modified measure $\mu_{mod} = \mathcal{N}_{mod}^{-1} \Omega^n$. Once again this is a formal power series. We denote 
\[ \mu_{mod}^{(\ell)} = \left( \mathcal{N}_{mod}^{(\ell)}\right)^{-1} \Omega^n , \]
where $\mathcal{N}_{mod}^{(\ell)}$ is defined in Equation \eqref{Nmodl}.
The modified measure is very nearly conserved by our discretization.

\begin{corollary}
\label{cor-discrete-measure}
Over any fixed time interval $[0,T]$, our proposed discretization $\Phi_h^{(\ell)}$, $\ell \in \mathbb{N}$ preserves the modified measure $\mu_{mod}^{(\ell)}$ up to an error of order $\cO(h^{\ell+1})$ 
in the 
following sense: if $K\subset \R^{2n}$ is a compact set and 
 $N \in \mathbb{N}$ is such that $Nh \in [0,T]$, then
 \begin{align*}
\int_{\big(\Phi_h^{(\ell)}\big)^N(K) } \mu_{mod}^{(\ell)} = \int_K  \mu_{mod}^{(\ell)} + \cO(h^{\ell+1}).
\end{align*}
\end{corollary}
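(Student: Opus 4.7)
The plan is to leverage Theorem~\ref{Th:Main} to compare the discrete map $\Phi_h^{(\ell)}$ against the true time-$h$ flow $\varphi_h$ of the conformally Hamiltonian system \eqref{eq:General2}, and then use the fact that $\varphi_h$ preserves $\mu_{mod}^{(\ell)}$ exactly. Indeed, by Theorem~\ref{Th:Main} the conformally Hamiltonian system~\eqref{eq:General2} has Hamilton function $\mathcal{E}^{(\ell)}$ and conformal factor $\mathcal{N}_{mod}^{(\ell)}$, so Proposition~\ref{P:properties-confHam}(iii) gives $\varphi_h^{\,*}\mu_{mod}^{(\ell)} = \mu_{mod}^{(\ell)}$ exactly.

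First I would establish the single-step estimate. Fix a compact neighborhood $\widetilde{K}$ of $K$; by Theorem~\ref{Th:Main} and smoothness of $\varphi_h$, for sufficiently small $h$ all iterates $A_j := \big(\Phi_h^{(\ell)}\big)^j(K)$ with $jh \le T$ remain inside a common compact set $K^\ast$. Writing $\mu_{mod}^{(\ell)} = \rho^{(\ell)}(\cdot;h)\,\d q\,\d p$ with $\rho^{(\ell)} = n!\,\big(\mathcal{N}_{mod}^{(\ell)}\big)^{-1}$, I would apply the change of variables formula to get
\begin{equation*}
\int_{\Phi_h^{(\ell)}(A_j)}\!\!\mu_{mod}^{(\ell)} - \int_{\varphi_h(A_j)}\!\!\mu_{mod}^{(\ell)}
= \int_{A_j}\Big[\big(\Phi_h^{(\ell)}\big)^{*}\rho^{(\ell)} - \varphi_h^{*}\rho^{(\ell)}\Big]\,\d q\,\d p.
\end{equation*}
Since Theorem~\ref{Th:Main} asserts $\Phi_h^{(\ell)} - \varphi_h = \mathcal{O}(h^{\ell+2})$ in $C^0(K^\ast)$, and by a standard argument the corresponding Jacobians also differ by $\mathcal{O}(h^{\ell+2})$ on $K^\ast$ (differentiating the map composition expansion used in the derivation of modified equations), the integrand above is $\mathcal{O}(h^{\ell+2})$ uniformly. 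Hence
\begin{equation*}
\int_{A_{j+1}}\mu_{mod}^{(\ell)} - \int_{A_j}\mu_{mod}^{(\ell)} = \int_{\Phi_h^{(\ell)}(A_j)}\mu_{mod}^{(\ell)} - \int_{\varphi_h(A_j)}\mu_{mod}^{(\ell)} = \mathcal{O}(h^{\ell+2}),
\end{equation*}
with the implicit constant depending on $T$ and $K^\ast$ but independent of $j$ and $h$.

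Finally I would telescope: summing the single-step bound over $j = 0, 1,\ldots, N-1$ with $N \le T/h$ yields
\begin{equation*}
\int_{A_N}\mu_{mod}^{(\ell)} - \int_K\mu_{mod}^{(\ell)} = N \cdot \mathcal{O}(h^{\ell+2}) = \mathcal{O}(h^{\ell+1}),
\end{equation*}
which is the claimed bound. The main obstacle is the verification that the Jacobian determinants of $\Phi_h^{(\ell)}$ and $\varphi_h$ differ by $\mathcal{O}(h^{\ell+2})$ uniformly on $K^\ast$; this requires slightly more than what Theorem~\ref{Th:Main} literally states (which is a $C^0$ estimate), but it follows from inspecting the construction of the modified equation, since the same Taylor-series matching underlying Theorem~\ref{Th:Main} controls the first derivatives of $\Phi_h^{(\ell)}$ as well. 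A secondary point requiring attention is ensuring the iterates remain in a common compact set on $[0,T]$, which follows from the $C^0$ closeness of $\Phi_h^{(\ell)}$ to the smooth flow $\varphi_h$ and standard Gronwall-type reasoning.
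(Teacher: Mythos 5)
Your proposal follows essentially the same route as the paper's proof: a single-step estimate obtained by comparing $\Phi_h^{(\ell)}$ with the exact, measure-preserving flow $\varphi_h^{(\ell)}$ of \eqref{eq:General2}, followed by telescoping over $N \le T/h$ steps. The only difference is that you make explicit the point that the Jacobians (not just the maps) must agree to order $\cO(h^{\ell+2})$ and that the iterates must stay in a common compact set --- both of which the paper's proof uses implicitly when writing $\big(\varphi_h^{(\ell)} + \cO(h^{\ell+2})\big)^{*}\mu_{mod}^{(\ell)}$ and pulling the $\cO$-term out of the integral over a compact domain.
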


\begin{proof}
Let $\varphi_t^{(\ell)}$ be the flow of conformally Hamiltonian equation \eqref{eq:General2} over a time interval of length $t$. By Theorem \ref{Th:Main}, the discrete map satisfies $\Phi_h^{(\ell)}(q,p) = \varphi_h^{(\ell)}(q,p) + \mathcal{O}(h^{\ell+2})$. As the flow of a conformally Hamiltonian system, $\varphi_t^{(\ell)}$ preserves the corresponding measure: $\left(\varphi_t^{(\ell)} \right)^* \mu_{mod}^{(\ell)} = \mu_{mod}^{(\ell)}$. It follows that
\begin{align*}
\int_K \left(\Phi_h^{(\ell)} \right)^* \mu_{mod}^{(\ell)} - \int_K  \mu_{mod}^{(\ell)} =
\int_K \left(\varphi_h^{(\ell)} + \cO(h^{\ell+2}) \right)^* \mu_{mod}^{(\ell)} - \int_K  \mu_{mod}^{(\ell)}
&= \cO(h^{\ell+2}),
\end{align*}
for any compact domain $K$. (Compactness ensures that the $\cO$-term can be pulled out of the integral.)
Hence for all $N \in \mathbb{N}$ such that $Nh \in [0,T]$ there holds
\begin{align*}
\int_K \left( \left(\Phi_h^{(\ell)} \right)^N \right)^* \mu_{mod}^{(\ell)} - \int_K  \mu_{mod}^{(\ell)} 
&=
\int_K \left( \left(\Phi_h^{(\ell)} \right)^* \right)^N \mu_{mod}^{(\ell)} - \int_K  \mu_{mod}^{(\ell)} \\
&= \sum_{j = 1}^N \left[ \int_K  \left( \left(\Phi_h^{(\ell)} \right)^* \right)^j \mu_{mod}^{(\ell)} - \int_K  \left( \left(\Phi_h^{(\ell)} \right)^* \right)^{j-1} \mu_{mod}^{(\ell)} \right] \\
&= \sum_{j = 1}^N \left[ \int_{\Phi_{(j-1)h}^{(\ell)}(K)}  \left(\Phi_h^{(\ell)} \right)^* \mu_{mod}^{(\ell)} - \int_{\Phi_{(j-1)h}^{(\ell)}(K)} \mu_{mod}^{(\ell)} \right] \\
&= \sum_{j = 1}^N \cO(h^{\ell+2}) 
= \cO(h^{\ell+1}). \qedhere
\end{align*}
\end{proof}

\subsection{ Implementation of the integrator}
\label{sec:implementation}

The definition of the map $\Phi_h^{(\ell)}$ requires an evaluation of the modified conformal Hamiltonian $\mathcal{E}$. In practice, it may be convenient to calculate it only once on the initial values $(q_0,p_0)$ and use $\Psi_{h,E}$ with $E = \mathcal{E}(q_0,p_0)$ for all steps. This avoids the potentially expensive calculation of $\mathcal{E}$ at each step in time.
Since $\mathcal{E}$ is the conformal Hamiltonian from Theorem \ref{Th:Main}, it is conserved up to a local error of order $\cO(h^{\ell+2})$. Therefore the results of Theorem \ref{Th:Main} and Corollary \ref{cor-discrete-measure} also apply to $\Psi_{h,\mathcal{E}^{(\ell)}(q_0,p_0)}$. In particular, $\Psi_{h,\mathcal{E}^{(\ell)}(q_0,p_0)}$ is measure preserving up to an error of order $\cO(h^{\ell+1})$ over a fixed time interval. 
In the numerical experiments presented below, we use $\Psi_{h,\mathcal{E}^{(\ell)}(q_0,p_0)}$. In a slight abuse of notation we will keep writing $\Phi_h^{(\ell)}$ to reference the nearly identical map $\Psi_{h,\mathcal{E}^{(\ell)}(q_0,p_0)}$.

To close this section, we present Table \ref{tab:Hamiltonians} with a  summary of the different Hamilton functions that occur in our construction.

\begin{table}[h]
    \centering
    \vspace{5mm}
    \renewcommand*{\arraystretch}{1.5}
    \begin{tabular}{|r c c l|}
        \hline
        Eqn. & Notation & Name & Equations of motion \\\hline
        \multicolumn{4}{|c|}{\textbf{Original system}} \\
        \eqref{eqn:confHam} & $H$ & Conformal Hamiltonian &  $\mathbf{i}_{ X} \Omega = \mathcal{N} \d H$ \\
        \eqref{eqn:altHam} & $K_E = \mathcal{N}(H-E) $ & Altered Hamiltonian & $\mathbf{i}_{ X} \Omega = \d K_E$ on $\{ K_E = 0 \}$ \\\hline
        \multicolumn{4}{|c|}{\textbf{Modified system, interpolating numerical solutions}} \\
        \eqref{eq:DefK} & $K_{mod}$ & Modified altered Hamiltonian &  $\mathbf{i}_{X_{mod}} \Omega = \d K$ on $\{ K_{mod} = 0 \}$ \\
        & $K_{mod}^{(\ell)}$ & Truncated modified altered Hamiltonian & \\
        \eqref{eq:DefE} & $\mathcal{E}$ & Modified conformal Hamiltonian & $\mathbf{i}_{X_{mod}} \Omega = \mathcal{N}_{mod} \d \mathcal{E}$ \\
        & $\mathcal{E}^{(\ell)}$ & Truncated modified conformal Hamiltonian &  \\\hline
    \end{tabular}
    \caption{Overview of all relevant Hamiltonians}
    \label{tab:Hamiltonians}
\end{table}

\section{Numerical experiments}
\label{sec:numerics}

We now apply our discretization to 
 Example \ref{ex-np} of the nonholonomic particle. We first give an outline
 of its implementation 
 using variational integrators  in the presence of an arbitrary
 potential $U(x,y)$  in Section~\ref{ss:disc-nh-particle}. 
 We then present numerical results for the  harmonic potential
 $U(x,y)=\frac{1}{2}(x^2+y^2)$ 
 in Section~\ref{ss:numerics-harmonic-potential} and the free nonholonomic particle ($U(x,y)=0$) in Section~\ref{ss:numerics-free}.

Whenever we write $\Phi_h^{(\ell)}$ in this section, the numerical implementation uses
$\Psi_{h,E}$ with $E=\mathcal{E}^{(\ell)}(q_0,p_0)$ as explained in Section \ref{sec:implementation}. The terms in the power series expansion of  $K_{mod}$, $\mathcal{E}$ and $\mathcal{N}_{mod}$ that are necessary 
for our analysis were  obtained using computer algebra in SageMath and the numerical experiments were implemented in python. All code is available at \cite{vermeeren2020code}.

\subsection{Geometric discretization of the nonholonomic particle}
\label{ss:disc-nh-particle}

Consider the nonholonomic particle in a potential introduced in Example \ref{ex-np}, with equations of motion~\eqref{eqn:nh-flow}. As explained in Section~\ref{SS:phi-simple}, the system is conformally Hamiltonian with Hamiltonian $H$ and conformal
factor $\mathcal{N}$ given by
\begin{equation*}
H(x,y,p_x,p_y)=\frac{1}{2}\left ( p_x^2 + (1+y^2)  p_y^2 \right) + U(x,y), \qquad \mathcal{N}(y)=\frac{1}{\sqrt{1+y^2}}.
\end{equation*}
The corresponding altered Hamiltonian is
\begin{equation*}
 K_E (x,y, p_x,  p_y)= \frac{1}{\sqrt{1+y^2}}(H(x,y,p_x,p_y)-E).
\end{equation*}

Our method involves the construction of a symplectic integrator $\Psi_{h,E}$ for the altered system
\begin{equation*}
 \dot x = \frac{\partial K_E}{\partial p_x}, \quad  \dot y = \frac{\partial K_E}{\partial p_y}, \quad
 \dot p_x =- \frac{\partial K_E}{\partial x}, \quad  \dot p_y = -\frac{\partial K_E}{\partial y}.
\end{equation*}
We find it convenient to do this using variational integrators, as in Section~\ref{sect-symplectic}. With this in mind we consider the 
Legendre transformation of $K_E$ above and introduce the 
 {\em altered Lagrangian} 
\[
\Lambda_E(x,y, \dot{x},\dot{y})=  \frac{1}{\sqrt{1+y^2}} \left( \frac{1}{2} (1+y^2) \dot{x}^2 + \frac{1}{2} \dot{y}^2 - U(x,y)+ E \right) =\mathcal{ N}(y)(L(x,y, \dot{x},\dot{y})+E),
\]
where the Lagrangian $L$ is given by~\eqref{eq:Lag-nh-particle}.
For the discrete Lagrangian we choose one of the following five combinations of midpoint (M) and trapezoidal (T) quadrature of the principal action $\int_0^h \Lambda_E \,\d t =\int_0^h \mathcal{N}(L+E) \,\d t$:
\begin{equation}\label{Ldisc}
    \begin{split}
    & \Lambda_{M}(x_0,y_0,x_1,y_1; E) = \cN \left( \frac{y_0 + y_1}{2} \right)  \left( L \left( \frac{x_0+x_1}{2}, \frac{y_0+y_1}{2}, \frac{x_1-x_0}{h}, \frac{y_1-y_0}{h} \right) + E \right),
    \\
    & \Lambda_{MT}(x_0,y_0,x_1,y_1; E) \\
    &\qquad = \cN \left( \frac{y_0 + y_1}{2} \right)  \left( \frac{1}{2} L \left( x_0, y_0, \frac{x_1-x_0}{h}, \frac{y_1-y_0}{h} \right) + \frac{1}{2} L \left( x_1, y_1, \frac{x_1-x_0}{h}, \frac{y_1-y_0}{h} \right) + E \right),
    \\
    & \Lambda_{TM}(x_0,y_0,x_1,y_1; E) = \frac{ \cN(y_0) + \cN(y_1)}{2}  \left( L \left( \frac{x_0+x_1}{2}, \frac{y_0+y_1}{2}, \frac{x_1-x_0}{h}, \frac{y_1-y_0}{h} \right) + E \right),
    \\
    & \Lambda_{TT}(x_0,y_0,x_1,y_1; E)  \\
    &\qquad = \frac{ \cN(y_0) + \cN(y_1)}{2}
    \left( \frac{1}{2} L \left( x_0, y_0, \frac{x_1-x_0}{h}, \frac{y_1-y_0}{h} \right) + \frac{1}{2} L \left( x_1, y_1, \frac{x_1-x_0}{h}, \frac{y_1-y_0}{h} \right) + E \right),
    \\
    & \Lambda_{T}(x_0,y_0,x_1,y_1; E)  \\
    &\qquad = 
    \frac{1}{2} \cN(y_0) \left( L \left( x_0, y_0, \frac{x_1-x_0}{h}, \frac{y_1-y_0}{h} \right) + E \right) + \frac{1}{2} \cN(y_1) \left( L \left( x_1, y_1, \frac{x_1-x_0}{h}, \frac{y_1-y_0}{h} \right) + 
    E \right).
    \end{split}
\end{equation}

It turns out that, for any potential $U$, the modified altered Hamiltonians for these discretizations only contain terms of even order in $h$:
\[ K_{mod}(q,p;h,E)= K_E(q,p)+h^2 K_2(q,p;E)+h^4 K_4(q,p;E)+\dots . \]
The reason for this is that the discretizations \eqref{Ldisc} are invariant under the transformaton $(x_0,y_0) \leftrightarrow (x_1,y_1)$. As a consequence, the truncated modified altered Hamiltonians satsify $K_{mod}^{(2\ell+1)} = K_{mod}^{(2\ell)}$. The same holds for $\mathcal{E}$ and $\mathcal{N}_{mod}$, hence we have that 
\[ \Phi_h^{(2\ell+1)} = \Phi_h^{(2\ell)}.\]
Recall that throughout this section we use $\Phi_h^{(\ell)}$ to denote the method $\Psi_{h,E}$ with $E=\mathcal{E}^{(\ell)}(q_0,p_0)$, as explained in Section \ref{sec:implementation}.

\subsection{Nonholonomic particle in a harmonic potential}
\label{ss:numerics-harmonic-potential}

We present the results of the numerical experiments for the potential $U(x,y) = \frac{1}{2}(x^2 +  y^2)$. We first investigate measure preservation and energy behavior of the method $\Phi_h^{(4)} = \Phi_h^{(5)} $, constructed for each of the five  discretizations in~\eqref{Ldisc}.
The results are presented in Sections~\ref{sss:measure-harmonic} and \ref{sss:energy}. Our analysis requires the calculation of the fourth order expansion of the corresponding power series for $K_{mod}$, $\mathcal{E}$ and $\mathcal{N}_{mod}$. The second order terms are listed in Appendix~\ref{sec:harmonic-formulas} while the third order terms vanish. We do not give explicit expressions of the fourth order terms because of their length, but these  may be found in our code \cite{vermeeren2020code} (along with the computer algebra tools to derive them). Finally, in Section \ref{ss:comparison} we illustrate
how the method $\Phi_h^{(0)}$ used  in~\cite{hairer1997variable,reich1999backward,FernandezBlochOlver2012} does not enjoy the nice
measure preservation properties of $\Phi_h^{(4)}$.

\begin{figure}[t]
\includegraphics[width=\linewidth]{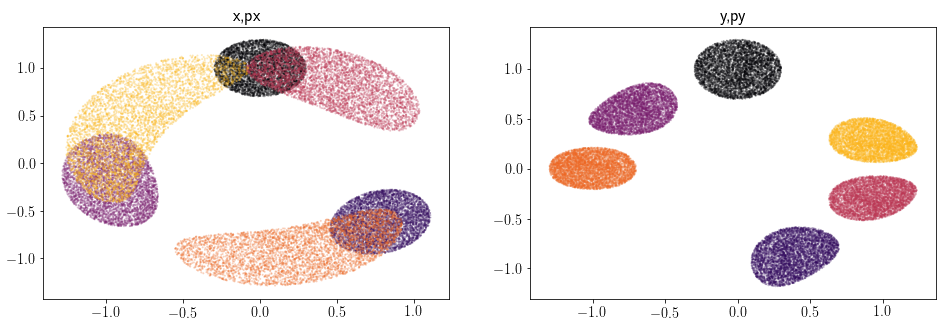}
\caption{Evolution of a spherical cloud (black) of $5000$ points with radius $0.3$ at times $0$, $2.75$, $\ldots$, $13.75$, projected to the $(x,p_x)$ and $(y,p_y)$ planes, using a high-accuracy method. The longer the time, the lighter the color.}
\label{fig:clouds}
\end{figure}

\subsubsection{Measure preservation}
\label{sss:measure-harmonic}

To investigate measure preservation numerically, we compute the trajectories of a point cloud using the map $\Phi_h^{(4)}$, with step size $h=0.25$. The initial points lie on a $3$-sphere centered around $(x,y,p_x,p_y) = (0,0,1,1)$. An illustration of the evolution of such  point cloud is given in Figure \ref{fig:clouds}. From this figure we can clearly see that the flow is not symplectic with respect to $\Omega=\d x \wedge \d p_x + \d y \wedge \d p_y$. If that were the case, the sum of the areas of the projections on the $(x,p_x)$ and $(y,p_y)$ planes would be constant. Note that there was no reason to expect a symplectic flow and that the lack of symplecticity does not imply anything about measure preservation. 

We want to study the volume, with respect to the relevant measures, of the image of the ball bounded by this 3-sphere under the flow over time. In the top-left panel of Figure \ref{fig:measures} we show the evolution of the volume of the convex hull enclosing the point cloud with respect to the measure $\mu_0 = \cN^{-1} \Omega^2$. This is the measure preserved by the continuous system. We see that the reference (high-accuracy) solution nicely preserves this volume until about $t=20$. This is when the region of phase space that we are tracking starts to be non-convex, so by plotting the volume of the convex hull we overestimate the actual volume of the region. The numerical solutions, however, show an oscillating  behaviour. This is because the discretizations do not preserve $\mu_0$ but rather the modified measures $\cN_{mod}^{-1} \Omega^2$, with densities given in terms of  the modified conformal factor $\cN_{mod}$. 

In the top-right panel of Figure \ref{fig:measures} we show the volume evolution, with respect to the first approximation of the modified measure,  $\mu_{mod}^{(2)} = \left(\cN_{mod}^{(2)} \right)^{-1} \Omega^2$, obtained using the conformal factor  $\cN_{mod}^{(2)} = \cN + h^2 \cN_2$. 
The volume with respect to the next approximation, $\mu_{mod}^{(4)} = \left(\cN_{mod}^{(4)} \right)^{-1} \Omega^2$, is shown in the bottom panel of Figure~\ref{fig:measures}. 
As expected, we see a decrease in the magnitude of the oscillations as the order $\ell$ of $\mu_{mod}^{(\ell)}$ increases. Note that, at least before $t=20$,  the amplitude of the fluctuations on 
the last graph is about a factor
of $2\cdot10^{-4}$ of the initial volume. This is in agreement
with Corollary~\ref{cor-discrete-measure} that predicts
preservation of $\mu_{mod}^{(4)} = \mu_{mod}^{(5)}$ by the integrator
$\Phi_h^{(4)}=\Phi_h^{(5)}$ up to order 
 $h^6\approx 2 \cdot 10^{-4}$.

\begin{figure}[t]
\centering
\includegraphics[width=\linewidth]{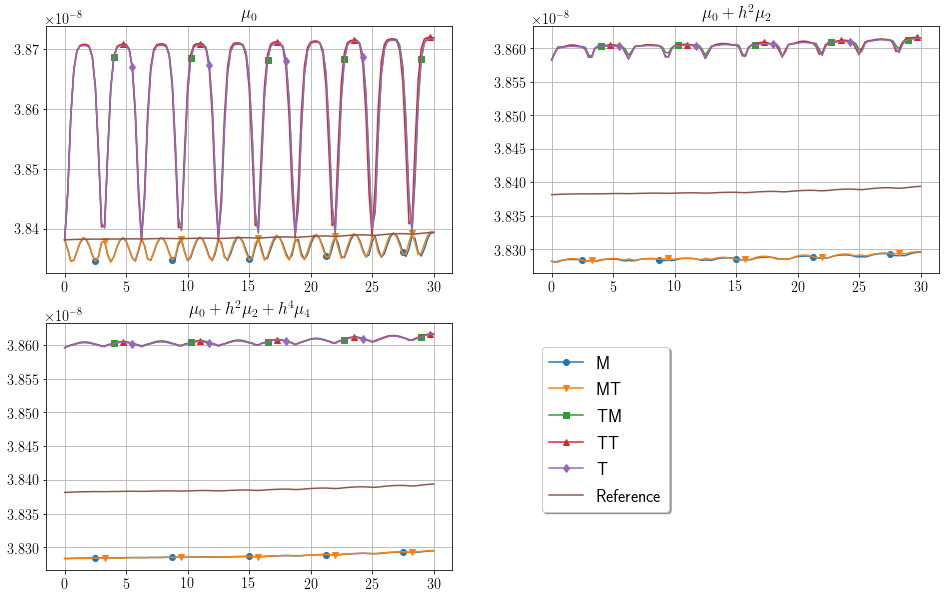}
\caption{
Evolution of the volume of a spherical cloud of $120$ points, arranged in a $600$-cell, with radius $0.01$ and centered around $(x,y,p_x,p_y) = (0,0,1,1)$, using our proposed  method $\Phi_h^{(4)}$ with time step  $h=0.25$ for the five variational integrators in \eqref{Ldisc} and a high-accuracy reference  solution. The volume is computed with respect to the measure indicated at the top of each graph.
}
\label{fig:measures}
\end{figure}

\begin{figure}[t]
\includegraphics[width=\linewidth]{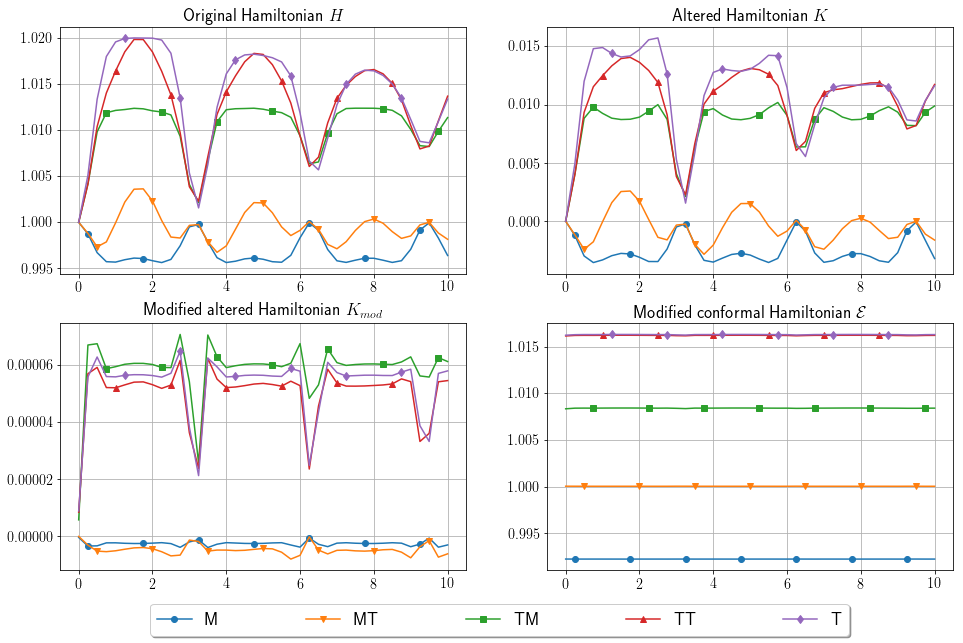}
\caption{
Application
of $\Phi_h^{(4)}$ to the nonholonomic particle in a harmonic potential
for each  discretization in~\eqref{Ldisc}: overview of the numerical values of the different energy functions involved in the algorithm. The initial condition is $(x,y,p_x),p_y) = (0,0,1,1)$ and the time step $h=0.25$.  }
\label{fig:hamiltonians}
\end{figure}

\begin{figure}[t]
\centering
\includegraphics[width=\linewidth]{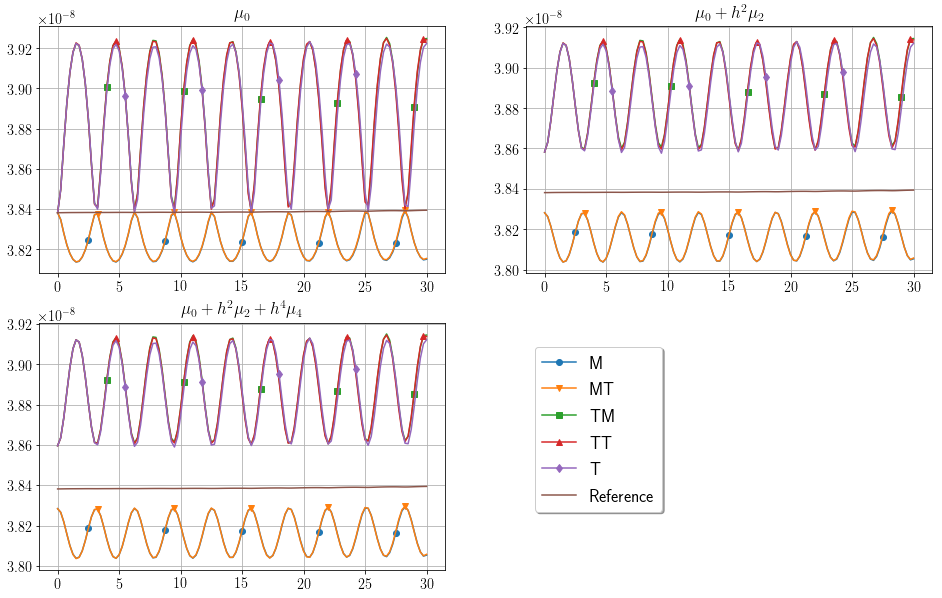}
\caption{
Evolution of the volume of a spherical cloud of $120$ points, arranged in a $600$-cell, with radius $0.01$ and centered around $(x,y,p_x,p_y) = (0,0,1,1)$, using the method $\Phi_h^{(0)}$ proposed in~\cite{hairer1997variable,reich1999backward,FernandezBlochOlver2012}  with time step $h=0.25$ for the five variational integrators in \eqref{Ldisc} and a high-accuracy reference  solution. The volume is computed with respect to the measure indicated at
the top of each graph.} 
\label{fig:measure-naive}
\end{figure}

\subsubsection{ Energy behavior}
\label{sss:energy}

In Figure~\ref{fig:hamiltonians} we graph the various  energy functions used in our algorithm and described in Table~\ref{tab:Hamiltonians}.  As before, we use the map $\Phi_h^{(4)}$ 
with the same time step $h=0.25$ and initial condition $(x,y,p_x,p_y) = (0,0,1,1)$. 
 The numerical values of the original Hamiltonian $H$ (top-left) oscillate close to its true value of 1. The altered Hamiltonian  $K_E$ with $E=1$, (top-right) shows a visually similar behavior, but the values are now close to 0, reflecting the fact that on the exact solution  $K_E$ is identically zero. 

The modified altered Hamiltonian $K_{mod}$ should be exactly preserved on the numerical solutions, up to a truncation error. The bottom-left panel of Figure \ref{fig:hamiltonians} shows the truncation  $K_{mod}^{(4)} = K_{mod}^{(5)}$, so we expect an error of order $h^6 \approx 2 \cdot 10^{-4}$. The graph indeed shows values of this order of magnitude 
 and smaller. In the bottom-right panel of Figure \ref{fig:hamiltonians} we plot the  truncated modified conformal Hamiltonian $\mathcal{E}^{(4)} = \mathcal{E}^{(5)}$. It exhibits oscillations of a similar size as  $K_{mod}^{(4)}$, which are invisible on the scale of this plot.

\subsubsection[Comparison of phi4 with phi0]{Comparison of our integrator $\Phi^{(4)}_h$
with the integrator  $\Phi^{(0)}_h$ used in~\cite{hairer1997variable,reich1999backward,FernandezBlochOlver2012} }
\label{ss:comparison}

When we apply the  measure preservation experiment described in Section~\ref{sss:measure-harmonic} to the integrator $\Phi^{(0)}_h$, we lose 
 the nice performance that was observed for $\Phi^{(4)}_h$. Indeed, 
in Figure~\ref{fig:measure-naive} we 
plot the evolution under $\Phi^{(0)}_h$, with $h=0.25$ as before, of the volume of the same test point cloud as before, with respect to the measures $\mu_0$, $\mu_{mod}^{(2)}$ and $\mu_{mod}^{(4)}$.
In contrast with Figure~\ref{fig:measures}, we observe large oscillations that do not seem to diminish with the order of approximation of the modified 
measure. 

On the other hand, experiments show little difference in 
the performance of $\Phi^{(0)}_h$ and $\Phi^{(4)}_h$ when
it comes to the error of approximation of the solutions and  energy behavior. We suspect that the reason
is that the solutions of the
system are bounded, due to the presence of the harmonic potential, and hence the conformal factor $\mathcal{N}$ is bounded away from zero along them. Therefore, since
the altered Hamiltonian $K_E = \mathcal{N}(H-E)$ is approximately conserved, we
expect that the difference $H-E$ remains small.  This good performance
properties of $\Phi^{(0)}_h$ do not hold for the free nonholonomic particle
treated below.

\subsection{Free nonholonomic particle}
\label{ss:numerics-free}

In order to further illustrate the benefits of our integrator $\Phi_h^{(\ell)}$ with respect to the discretization $\Phi_h^{(0)}$ used in~\cite{hairer1997variable,reich1999backward,FernandezBlochOlver2012}, 
 we treat  the nonholonomic particle in the
absence of potential energy. The main difference with respect to
the system with the harmonic potential treated in the previous section
is that the conformal
factor $\mathcal{N}$ approaches zero  as time grows along all solutions to the system for which
$p_y(0)\neq 0$. One may easily deduce this property from the equations
of motion. Because of this feature of the system, taking $\ell>0$ 
is not only relevant for measure preservation of  $\Phi_h^{(\ell)}$ but
it is also important 
in both  the energy behavior and  the overall numerical error, as we will see below.

In Figures \ref{fig:free2} and \ref{fig:free-naive2} we graph numerical values of the Hamiltonians and the Euclidean norm of the error in position and momentum. In Figure \ref{fig:free2} this is done with  $\Phi_h^{(4)}$,\ whereas in Figure \ref{fig:free-naive2} we use  $\Phi_h^{(0)}$.
In both cases we present the implementation of each of the five integrators in \eqref{Ldisc} with the same initial values and step size\footnote{The implementation of $\Phi_h^{(4)}$ requires the calculation of the fourth order truncation of $\mathcal{E}$. The second order terms 
$\mathcal{E}_2(q,p)$  for each  discretization in~\eqref{Ldisc} can be found in Appendix~\ref{sec:free-formulas} (where we also give 
the corresponding terms $K_2$ and  $\mathcal{N}_{2}$).
We do not present  the fourth order terms because of their complexity but they may be
found in our code \cite{vermeeren2020code}. }.
It is clear from the graphs that the fourth order 
method   $\Phi_h^{(4)}$ outperforms $\Phi_h^{(0)}$ 
both in the energy behavior and  numerical error.

\begin{figure}[p]
\includegraphics[width=\linewidth]{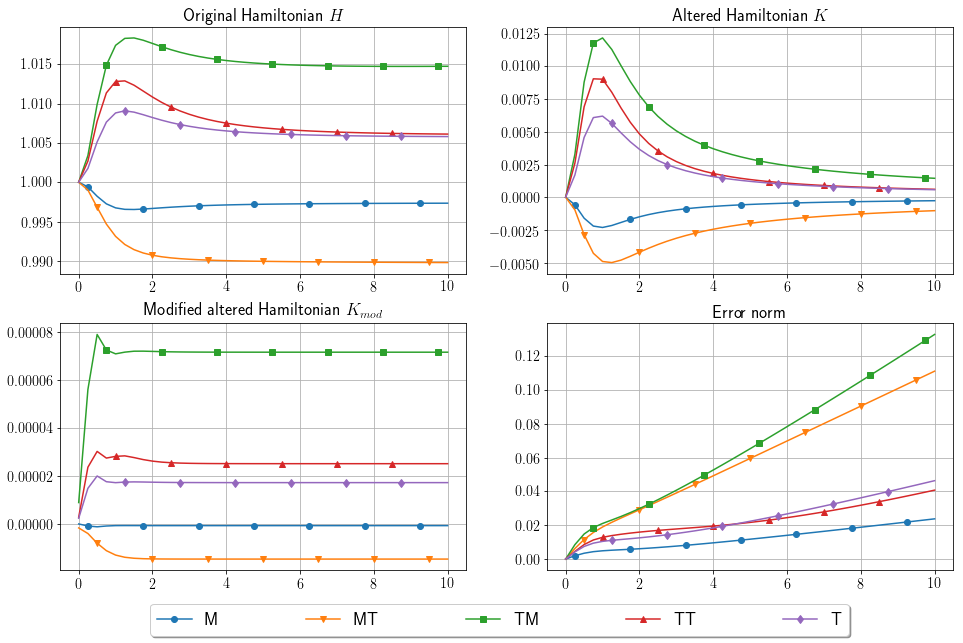}
\vspace{-6mm}\hbox{}
\caption{
Application
of $\Phi_h^{(4)}$ to the free nonholonomic particle
for each  discretization in~\eqref{Ldisc}. Overview of the numerical values of the different energy functions involved in the algorithm and error norm 
of the solutions. The initial condition is $(x,y,p_x,p_y) = (0,0,1,1)$ and the step size $h=0.25$. }
\label{fig:free2}
\end{figure}

\begin{figure}[p]
\centering
\includegraphics[width=\linewidth]{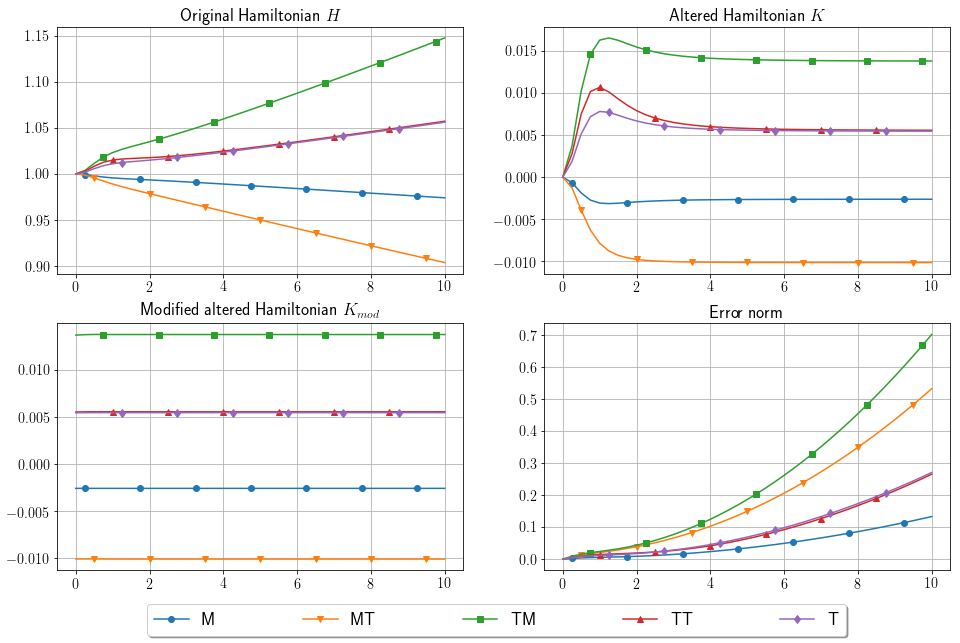}
\vspace{-6mm}\hbox{}
\caption{
Application
of $\Phi_h^{(0)}$ to the free nonholonomic particle
for each  discretization in~\eqref{Ldisc}: overview of the numerical values of the different energy functions involved in the algorithm and error norm 
of the solutions. The initial condition and the time step coincide with
those of Figure~\ref{fig:free2}. }
\label{fig:free-naive2}
\end{figure}

In Figure \ref{fig:free2} we see that for $\Phi_h^{(4)}$ the modified altered Hamiltonian is very close to zero and the altered Hamiltonian seems to converge to zero as well. As a consequence, we see very little drift in the Hamiltonian $H$. In contrast to the good energy behavior of $\Phi_h^{(4)}$,  
Figure~\ref{fig:free-naive2} shows that for $\Phi_h^{(0)}$ 
the value of the modified altered Hamiltonian is not so close to zero and the altered Hamiltonians $K_E=\mathcal{N}(H-E)$ approach the same nonzero values. Since $\mathcal{N}$ converges to zero, the difference 
$H-E$ grows leading to the observed 
drift in the Hamiltonian $H$. This poor behavior seems to carry over
to produce a faster growth in the error of the numerical approximation.
The poor energy behavior of $\Phi_h^{(0)}$ had  already been reported in 
\cite[Section 5.1]{FernandezBlochOlver2012}.

\section{Conclusions}
\label{s:conclusions}

We have introduced a discretization of the 
conformally Hamiltonian system \eqref{eq:itro-conformal} that is shown
to be formally interpolated by the flow of
a modified conformally Hamiltonian system, and in particular is measure preserving. Our 
discretization is implemented by 
applying a symplectic integrator to 
the altered system~\eqref{eq:itro-conformal-altered} where the parameter $E$ is taken as
the initial value of the modified conformal Hamiltonian $\mathcal{E}$ introduced in
Definition~\ref{def:mod-conf-Ham}. We have
conducted numerical experiments to compare our  approach  with the one followed by previous references~\cite{hairer1997variable,reich1999backward,FernandezBlochOlver2012}
where  the parameter $E$ in~\eqref{eq:itro-conformal-altered}
is instead taken as
the initial value of the  Hamiltonian $H$.
Our numerical results show that our method outperforms
the existing  one in measure preservation, energy behavior and overall numerical error. 

Moreover, to the best of our knowledge, the
application of our method to Hamiltonizable
Chaplygin systems provides the first example
of a measure preserving discretization of a 
measure preserving nonholonomic system.

\section*{Acknowledgements} 
The authors are grateful to Yuri Suris for inspiring discussions and insightful suggestions, which deeply influenced this paper.

LGN is thankful to the Alexander von Humboldt Foundation for a Georg Forster Advanced Research Fellowship that funded a research visit to TU Berlin where this project was
started. He also acknowledges support of the Program UNAM-DGAPA-PAPIIT IN115820 for his
research.

MV is funded currently by DFG Research Fellowship VE 1211/1-1 and at the time this work was started by the SFB Transregio 109 ``Discretization in Geometry and Dynamics''.

\appendix
\section{Second order terms used in the numerics}
\label{appendix}

The truncation after the second order term of the modified altered Hamiltonian for each discretization in \eqref{Ldisc} takes the form
\begin{equation*}
 K_{mod}^{(2)} (x,y, p_x, p_y;E)= \frac{1}{\sqrt{1+y^2}}(H(x,y,p_x,p_y)-E)+ h^2 K_2(x,y,p_x,p_y;E), 
\end{equation*}
where the function $K_2$ depends on the discretization and on the potential. Explicit expressions for $K_2$ for each discretization for the harmonic potential ($U(x,y)=\frac{1}{2}(x^2+y^2)$) and the free nonholonomic particle ($U(x,y)=0$) are listed below in \ref{sec:harmonic-formulas} and~\ref{sec:free-formulas}.
Using these expressions we can determine the truncated modified conformal Hamiltonian 
\[
\mathcal{E}^{(2)} (x,y, p_x,  p_y) = H(x,y,p_x,p_y)
+ h^2 \mathcal{E}_2 (x,y,p_x,p_y),
\]
where the functions $\mathcal{E}_2$, depending on the discretization, are also listed below. Finally, knowing $\mathcal{E}^{(2)}$ we can determine
\[\mathcal{N}_{mod}^{(2)} (x,y,p_x,p_y)=\frac{1}{\sqrt{1+y^2}} + h^2 \mathcal{N}_2(x,y,p_x,p_y) . \]
The specific form of $\mathcal{N}_2$ depending on the
discretization, can be found in \ref{sec:harmonic-formulas} and~\ref{sec:free-formulas} too. For both the harmonic  and
the free particle potentials we found that 
\begin{align*}
\mathcal{N}^{(2)}_{M} =
\mathcal{N}^{(2)}_{MT}, \qquad 
\mathcal{N}^{(2)}_{TM} = 
\mathcal{N}^{(2)}_{TT} = 
\mathcal{N}^{(2)}_{T}.
\end{align*}
In particular, we see that among these examples, $\mathcal{N}^{(2)}$ only depends on the discretization of the conformal factor $\mathcal{N}$. However, at higher orders also the discretization of the Lagrangian $L$ plays a role: each of the five discretizations \eqref{Ldisc} leads to a different formula for $\mathcal{N}_4$ and hence for $\mathcal{N}^{(\ell)}$ for all $\ell \geq 4$.

The explicit expressions below were obtained using the SageMath software, with code that is available at \cite{vermeeren2020code}. The same code can also be used to obtain higher-order terms.

\subsection{In a harmonic potential}
\label{sec:harmonic-formulas}

\begin{flalign*}
&(K_2)_M(x,y,p_x,p_y;E) &\\
&= \frac{1}{96 \left( {y}^{2}+1 \right) ^{5/2}} &\\
&\qquad \bigg({\left(3 p_y^{4} + 2 p_y^{2} - 1\right)} y^{6} + 2 {\left(2 p_y^{4} - {\left(3 {p_x}^{2} - 6 E - 4\right)} p_y^{2} - {\left(3 p_y^{2} - 1\right)} {x}^{2} + {p_x}^{2} - 2 E - 2\right)} y^{4} &\\
&\qquad - {\left({p_x}^{4} + p_y^{4} + {x}^{4} - 4 E {p_x}^{2} + 2 {\left(2 {p_x}^{2} - 4 E - 1\right)} p_y^{2} + 2 {\left({p_x}^{2} + 2 p_y^{2} - 2 E\right)} {x}^{2} + 4 E^{2} + 8 E + 4\right)} y^{2} &\\
&\qquad - 2 p_y^{4} + 2 {\left({p_x}^{2} - 2 E - 2\right)} p_y^{2} + 2 {\left(p_y^{2} - 2\right)} {x}^{2} - 4 {p_x}^{2}\bigg) &
\end{flalign*}
\begin{flalign*}
& (K_2)_{MT}(x,y,p_x,p_y;E) \\
& = \frac{1}{96 \left( {y}^{2}+1 \right) ^{5/2}} \\
&\qquad \bigg({\left(3 p_y^{4} + 14 p_y^{2} - 1\right)} y^{6} + 2 {\left(2 p_y^{4} - {\left(9 {p_x}^{2} - 6 E - 22\right)} p_y^{2} - {\left(3 p_y^{2} - 1\right)} {x}^{2} + {p_x}^{2} - 2 E - 2\right)} y^{4} \\
&\qquad - {\left({p_x}^{4} + p_y^{4} + {x}^{4} - 4 {\left(E + 3\right)} {p_x}^{2} + 2 {\left(14 {p_x}^{2} - 4 E - 19\right)} p_y^{2} + 2 {\left({p_x}^{2} + 2 p_y^{2} - 2 E\right)} {x}^{2} + 4 E^{2} + 8 E + 4\right)} y^{2} \\
&\qquad - 2 p_y^{4} - 2 {\left(5 {p_x}^{2} + 2 E - 4\right)} p_y^{2} + 2 {\left(p_y^{2} - 2\right)} {x}^{2}  + 8 {p_x}^{2}\bigg)
\end{flalign*}
\begin{flalign*}
&(K_2)_{TM}(x,y,p_x,p_y;E) &\\
& = \frac{-1}{96 \left( {y}^{2}+1 \right) ^{5/2}} &\\
&\qquad \bigg({\left(9 p_y^{4} - 14 p_y^{2} + 1\right)} y^{6} + 2 {\left(7 p_y^{4} + {\left(9 {p_x}^{2} + 6 E - 7\right)} p_y^{2} - {\left(3 p_y^{2} + 1\right)} {x}^{2} - {p_x}^{2} + 2 E + 2\right)} y^{4} &\\
&\qquad + {\left({p_x}^{4} + p_y^{4} + {x}^{4} - 4 E {p_x}^{2} + 2 {\left(5 {p_x}^{2} + 2 E + 2\right)} p_y^{2} + 2 {\left({p_x}^{2} - p_y^{2} - 2 E\right)} {x}^{2} + 4 E^{2} + 8 E + 4\right)} y^{2} &\\
&\qquad - 4 p_y^{4} - 4 {\left(2 {p_x}^{2} + 2 E - 1\right)} p_y^{2} + 4 {\left(p_y^{2} + 1\right)} {x}^{2}  + 4 {p_x}^{2}\bigg) &
\end{flalign*}
\begin{flalign*}
&(K_2)_{TT}(x,y,p_x,p_y;E) &\\
&= \frac{-1}{96 \left( {y}^{2}+1 \right) ^{5/2}} &\\
&\qquad \bigg({\left(9 p_y^{4} - 26 p_y^{2} + 1\right)} y^{6} + 2 {\left(7 p_y^{4} + {\left(15 {p_x}^{2} + 6 E - 25\right)} p_y^{2} - {\left(3 p_y^{2} + 1\right)} {x}^{2} - {p_x}^{2} + 2 E + 2\right)} y^{4} &\\
&\qquad +{\left({p_x}^{4} + p_y^{4} + {x}^{4} - 4 {\left(E + 3\right)} {p_x}^{2} + 2 {\left(17 {p_x}^{2} + 2 E - 16\right)} p_y^{2} + 2 {\left({p_x}^{2} - p_y^{2} - 2 E\right)} {x}^{2} + 4 E^{2} + 8 E + 4\right)} y^{2} &\\
&\qquad - 4 p_y^{4} + 4 {\left({p_x}^{2} - 2 E - 2\right)} p_y^{2} + 4 {\left(p_y^{2} + 1\right)} {x}^{2}  - 8 {p_x}^{2}\bigg) &
\end{flalign*}
\begin{flalign*}
& (K_2)_{T}(x,y,p_x,p_y;E) &\\
&= \frac{-1}{96 \left( {y}^{2}+1 \right) ^{5/2}} &\\
&\qquad \bigg({\left(9 p_y^{4} - 2 p_y^{2} + 1\right)} y^{6} + 24 {p_x} p_y {x} y^{3} + 2 {\left(7 p_y^{4} + {\left(3 {p_x}^{2} + 6 E - 1\right)} p_y^{2} - {\left(3 p_y^{2} + 1\right)} {x}^{2} - {p_x}^{2} + 2 E + 2\right)} y^{4} &\\
&\qquad + {\left({p_x}^{4} + p_y^{4} + {x}^{4} - 4 {\left(E + 3\right)} {p_x}^{2} + 2 {\left(5 {p_x}^{2} + 2 E - 4\right)} p_y^{2} + 2 {\left({p_x}^{2} - p_y^{2} - 2 E\right)} {x}^{2} + 4 E^{2} + 8 E + 4\right)} y^{2} &\\
&\qquad - 4 p_y^{4} + 24 {p_x} p_y {x} y + 4 {\left({p_x}^{2} - 2 E - 2\right)} p_y^{2} + 4 {\left(p_y^{2} + 1\right)} {x}^{2} - 8 {p_x}^{2}\bigg) &
\end{flalign*}

\begin{flalign*}
& (\mathcal{E}_2)_{M} (x,y,p_x,p_y) = \frac{2  p_y^4 {y}^{4} + p_y^4 {y}^{2} + p_y^2 {y}^{4} - p_y^4 - {y}^{4} - {p_x}^{2} - p_y^2 - {x}^{2} - {y}^{2}}{24  {\left({y}^{2} + 1\right)}}
&\\
&(\mathcal{E}_2)_{MT} (x,y,p_x,p_y) = \frac{2  p_y^4 {y}^{4} - 3  {p_x}^{2} p_y^2 {y}^{2} + p_y^4 {y}^{2} + 4  p_y^2 {y}^{4} - 3  {p_x}^{2} p_y^2 - p_y^4 + 6  p_y^2 {y}^{2} - {y}^{4} + 2  {p_x}^{2} + 2  p_y^2 - {x}^{2} - {y}^{2}}{24  {\left({y}^{2} + 1\right)}}
&\\
&(\mathcal{E}_2)_{TM} (x,y,p_x,p_y) = -\frac{4  p_y^4 {y}^{4} + 6  {p_x}^{2} p_y^2 {y}^{2} + 2  p_y^4 {y}^{2} - p_y^2 {y}^{4} - 3  {p_x}^{2} p_y^2 - 2  p_y^4 + {y}^{4} + {p_x}^{2} + p_y^2 + {x}^{2} + {y}^{2}}{24  {\left({y}^{2} + 1\right)}}
&\\
&(\mathcal{E}_2)_{TT} (x,y,p_x,p_y) = -\frac{4  p_y^4 {y}^{4} + 9  {p_x}^{2} p_y^2 {y}^{2} + 2  p_y^4 {y}^{2} - 4  p_y^2 {y}^{4} - 2  p_y^4 - 6  p_y^2 {y}^{2} + {y}^{4} - 2  {p_x}^{2} - 2  p_y^2 + {x}^{2} + {y}^{2}}{24  {\left({y}^{2} + 1\right)}}
&\\
&(\mathcal{E}_2)_{T} (x,y,p_x,p_y) = -\frac{4  p_y^4 {y}^{4} + 3  {p_x}^{2} p_y^2 {y}^{2} + 2  p_y^4 {y}^{2} + 2  p_y^2 {y}^{4} - 2  p_y^4 + 6  {p_x} {p_y} {x} {y} + {y}^{4} - 2  {p_x}^{2} - 2  p_y^2 + {x}^{2} + {y}^{2}}{24  {\left({y}^{2} + 1\right)}} &
\end{flalign*}

\begin{flalign*}
&(\mathcal{N}_2)_{M}(x,y,p_x,p_y) =
(\mathcal{N}_2)_{MT}(x,y,p_x,p_y) = -\frac{2  \left({p_y}^{2} - 1\right) {y}^{2} -  {p_y}^{2}}{24 \left({y}^{2} + 1\right)^\frac{3}{2}}
& \\
&(\mathcal{N}_2)_{TM}(x,y,p_x,p_y) = 
(\mathcal{N}_2)_{TT}(x,y,p_x,p_y) = 
(\mathcal{N}_2)_{T}(x,y,p_x,p_y) =  \frac{\left(2 p_y^{2} + 1\right) {y}^{2} - p_y^{2}}{12 \left({y}^{2} + 1\right)^\frac{3}{2}} &
\end{flalign*}

\subsection{For the free nonholonomic particle}
\label{sec:free-formulas}

\begin{flalign*}
(K_2)_M(x,y,p_x,p_y;E) 
= \frac{1}{96 \left( y^{2}+1 \right)^{5/2}} 
&\bigg( 3 p_y^{4} y^{6} + 2 \left(2 p_y^{4} - 3 \left(p_x^{2} - 2 E\right) p_y^{2}\right) y^{4} - 2 p_y^{4} + 2 \left(p_x^{2} - 2 E\right) p_y^{2} & \\
&\quad - \left(p_x^{4} + p_y^{4} - 4 E p_x^{2} + 4 \left(p_x^{2} - 2 E\right) p_y^{2} + 4 E^{2}\right) y^{2}\bigg) &
\end{flalign*}
\begin{flalign*}
(K_2)_{MT}(x,y,p_x,p_y;E)
= \frac{1}{96 \left( {y}^{2}+1 \right) ^{5/2}}
&\bigg(3 p_y^{4} y^{6} + 2 \left(2 p_y^{4} - 3 \left(3 p_x^{2} - 2 E\right) p_y^{2}\right) y^{4} - 2 p_y^{4} - 2 \left(5 p_x^{2} + 2 E\right) p_y^{2} & \\
&\quad - \left(p_x^{4} + p_y^{4} - 4 E p_x^{2} + 4 \left(7 p_x^{2} - 2 E\right) p_y^{2} + 4 E^{2}\right) y^{2}\bigg) &
\end{flalign*}
\begin{flalign*}
(K_2)_{TM}(x,y,p_x,p_y;E) 
 = \frac{-1}{96 \left( {y}^{2}+1 \right) ^{5/2}} 
&\bigg(9 p_y^{4} y^{6} + 2 \left(7 p_y^{4} + 3 \left(3 p_x^{2} + 2 E\right) p_y^{2}\right) y^{4} - 4 p_y^{4} - 8 \left(p_x^{2} + E\right) p_y^{2} & \\
&\quad + \left(p_x^{4} + p_y^{4} - 4 E p_x^{2} + 2 \left(5 p_x^{2} + 2 E\right) p_y^{2} + 4 E^{2}\right) y^{2}\bigg) &
\end{flalign*}
\begin{flalign*}
(K_2)_{TT}(x,y,p_x,p_y;E)
= \frac{-1}{96 \left( {y}^{2}+1 \right) ^{5/2}} 
& \bigg(9 p_y^{4} y^{6} + 2 \left(7 p_y^{4} + 3 \left(5 p_x^{2} + 2 E\right) p_y^{2}\right) y^{4} - 4 p_y^{4} + 4 \left(p_x^{2} - 2 E\right) p_y^{2} + & \\
&\quad \left(p_x^{4} + p_y^{4} - 4 E p_x^{2} + 2 \left(17 p_x^{2} + 2 E\right) p_y^{2} + 4 E^{2}\right) y^{2}\bigg) &
\end{flalign*}
\begin{flalign*}
(K_2)_{T}(x,y,p_x,p_y;E) 
= \frac{-1}{96 \left( {y}^{2}+1 \right) ^{5/2}} 
& \bigg(9 p_y^{4} y^{6} + 2 \left(7 p_y^{4} + 3 \left(p_x^{2} + 2 E\right) p_y^{2}\right) y^{4} - 4 p_y^{4} + 4 \left(p_x^{2} - 2 E\right) p_y^{2} + &\\
&\quad \left(p_x^{4} + p_y^{4} - 4 E p_x^{2} + 2 \left(5 p_x^{2} + 2 E\right) p_y^{2} + 4 E^{2}\right) y^{2}\bigg) &
\end{flalign*}

\begin{flalign*}
& (\mathcal{E}_2)_{M} (x,y,p_x,p_y) = \frac{1}{12} p_y^{4} y^{2} - \frac{1}{24} p_y^{4} &
\\
& (\mathcal{E}_2)_{MT} (x,y,p_x,p_y) = \frac{1}{12} p_y^{4} y^{2} - \frac{1}{8} p_x^{2} p_y^{2} - \frac{1}{24} p_y^{4} &
\\
& (\mathcal{E}_2)_{TM} (x,y,p_x,p_y) = -\frac{4 p_y^{4} y^{4} - 3 p_x^{2} p_y^{2} - 2 p_y^{4} + 2 \left(3 p_x^{2} p_y^{2} + p_y^{4}\right) y^{2}}{24 \left(y^{2} + 1\right)} &
\\
& (\mathcal{E}_2)_{TT} (x,y,p_x,p_y) = -\frac{4 p_y^{4} y^{4} - 2 p_y^{4} + \left(9 p_x^{2} p_y^{2} + 2 p_y^{4}\right) y^{2}}{24 \left(y^{2} + 1\right)} &
\\
&(\mathcal{E}_2)_{T} (x,y,p_x,p_y) = -\frac{4 p_y^{4} y^{4} - 2 p_y^{4} + \left(3 p_x^{2} p_y^{2} + 2 p_y^{4}\right) y^{2}}{24 \left(y^{2} + 1\right)} &
\end{flalign*}

\begin{flalign*}
& (\mathcal{N}_2)_{M}(x,y,p_x,p_y) =
(\mathcal{N}_2)_{MT}(x,y,p_x,p_y) = -\frac{2 \, p_y^{2} {y}^{2} - p_y^{2}}{24 \, {\left({y}^{2} + 1\right)}^{\frac{3}{2}}} &
\\
& (\mathcal{N}_2)_{TM}(x,y,p_x,p_y) = 
(\mathcal{N}_2)_{TT}(x,y,p_x,p_y) = 
(\mathcal{N}_2)_{T}(x,y,p_x,p_y) = \frac{2 p_y^{2} {y}^{2} - p_y^{2}}{12 \left({y}^{2} + 1\right)^\frac{3}{2}} &
\end{flalign*}

\bibliographystyle{abbrvnat_mv} 

\bibliography{refs}

\end{document}